\newtheorem{thrm}{Theorem}[section]
\newtheorem{lemma}[thrm]{Lemma}
\newtheorem{prop}[thrm]{Proposition}
\newtheorem{question}[thrm]{Question}
\newtheorem*{claim}{Claim}
\theoremstyle{definition}
\newtheorem{defin}[thrm]{Definition}
\newtheorem{rem}[thrm]{Remark}
\newtheorem{exa}[thrm]{Example}
\DeclareMathOperator{\supp}{supp}
\newcommand{\NN}{\mathbb{N}}
\begin{document}

\title[On the class of NY compact spaces]{On the class of NY compact spaces of finitely supported elements and related classes}
\author{Antonio Avil\'es}
\address{Universidad de Murcia, Departamento de Matem\'{a}ticas, Campus de Espinardo 30100 Murcia, Spain.} \email{avileslo@um.es}
\author{Miko\l aj Krupski}
\address{Universidad de Murcia, Departamento de Matem\'{a}ticas, Campus de Espinardo 30100 Murcia, Spain\\ and \\ Institute of Mathematics\\ University of Warsaw\\ ul. Banacha 2\\
02--097 Warszawa, Poland }
\email{mkrupski@mimuw.edu.pl}

\begin{abstract}
We prove that a compact space $K$ embeds into a $\sigma$-product of compact metrizable spaces ($\sigma$-product of intervals) if and only if $K$ is (strongly countable-dimensional) hereditarily metalindel\"of and every subspace of $K$ has a nonempty relative open second countable subset. This provides novel characterizations of $\omega$-Corson and $NY$ compact spaces. We give an example of a uniform Eberlein compact space that does not embed into a product of compact metric spaces in such a way that the $\sigma$-product is dense in the image. In particular, this answers a question of Kubi\'s and Leiderman. We also show that for a compact space $K$ the property of being $NY$ compact is determined by the topological structure of the space $C_p(K)$ of continuous real-valued functions of $K$ equipped with the pointwise convergence topology. This refines a recent result of Zakrzewski.
\end{abstract}

\subjclass[2020]{Primary: 46A50, 54D30, 54G12}

\keywords{Eberlein compact, Corson compact, Valdivia compact, scattered space, $\sigma$-product}

\maketitle

\section{Introduction}
A compact space $K$ is \textit{Eberlein compact} if $K$ is homeomorphic to a weakly compact subset of a Banach space. This is equivalent to saying that, for some $\Gamma$, the space $K$ is homeomorphic to a compact subset of
$\{x\in [0,1]^\Gamma :\forall \varepsilon>0\;|\{\gamma\in \Gamma:x(\gamma)>\varepsilon\}|<\omega\}$. Given a family $\{X_\gamma:\gamma\in \Gamma\}$ of topological spaces $X_\gamma$ and a point $a=(a_\gamma)_{\gamma\in \Gamma}\in X=\prod_{\gamma\in \Gamma} X_\gamma$, we define the $\sigma$-product in $X$ based at $a$ as
$$\sigma(X,a)=\{x\in \prod_{\gamma\in \Gamma} X_\gamma:\{\gamma\in \Gamma:x_\gamma\neq a_\gamma\}<\omega\}.$$

In this paper we are concerned with the following two, fairly natural subclasses of Eberlein compact spaces.

\begin{defin}
A compact space $K$ is \textit{$\omega$-Corson} if, for some $\Gamma$, $K$ embeds into $\sigma([0,1]^\Gamma)=\{x\in [0,1]^\Gamma:|\{\gamma\in \Gamma:x_\gamma\neq 0\}|<\omega\}$.
\end{defin}

\begin{defin}\label{def2}
A compact space $K$ is \textit{$NY$ compact} if $K$ embeds into $\sigma(\prod_{\gamma\in \Gamma} X_\gamma,a)$, for some family $\{X_\gamma:\gamma\in \Gamma\}$ of compact metrizable spaces and some $a\in \prod_{\gamma\in \Gamma} X_\gamma$.
\end{defin}

Both $\omega$-Corson and $NY$ compacta were recently studied by several authors (see \cite{BKT} \cite{MPZ} and \cite{Z}).
The name $NY$ compacta was introduced in \cite{MPZ} to acknowledge pioneering work by Nakhmanson and Yakovlev on the subject (see \cite{NY}). It easily follows from the Baire category theorem that the Hilbert cube $[0,1]^\omega$ is not $\omega$-Corson. In fact, a compact metric space $K$ is $\omega$-Corson if and only if $K$ is strongly countable-dimensional, i.e., $K$ can be written as a countable union of closed finite-dimensional subspaces (see \cite{MPZ}). However, it is readily seen that every compact metric space is $NY$ compact. It is also easy to show that every $NY$ compact space is Eberlein compact.

The first internal characterizations the class of $NY$ compacta were already established in the seminal paper \cite{NY} by Nakhmanson and Yakovlev. A new description was recently found by Marciszewski, Plebanek and Zakrzewski in \cite{MPZ}.

In the present paper we further contribute to this line of research. Inspired by the celebrated result of Alster \cite{A} characterizing scattered Corson compacta as strong Eberlein ones, we prove that $K$ is $NY$ compact if and only if $K$ is  Corson compact and has the following property, which we refer to as \textit{M-scatteredness}: every nonempty subset of $K$ contains a nonempty relatively open set of countable weight. We also give two other conditions that are equivalent to being $NY$ compact. This is stated as Theorem \ref{characterization_NY_2} below, which is the main result of the paper. A similar theorem is proved for $\omega$-Corson compacta.

In section 4 we study $\omega$-Valdivia and $NY$-Valdivia compact spaces -- classes of compacta whose definitions stem naturally from combining the classical notion of Valdivia compact spaces (see \cite{Ka1}) with the notions of $\omega$-Corson and $NY$ compacta, respectively (see Section 2 for precise definitions).
Both $\omega$-Valdivia and $NY$-Valdivia compacta are easily seen to belong to the class of semi-Eberlein compact spaces introduced and studied in \cite{KL}. Following \cite{KL}, we say that a compact space $K$ is \textit{semi-Eberlein} if, for some $\Gamma$, $K$ embeds into $[0,1]^\Gamma$ in such a way that the set  $\{x\in [0,1]^\Gamma :\forall \varepsilon>0\;|\{\gamma\in \Gamma:x(\gamma)>\varepsilon\}|<\omega\}$ is dense in the image.

The following diagram explains relations between classes of compacta relevant to the subject of the present paper. Perhaps the least obvious implication below is the fact that metrizable compacta are $\omega$-Valdivia; this assertion is due to Marciszewski Plebanek and Zakrzewski (see \cite[Proposition 6.5]{MPZ}).

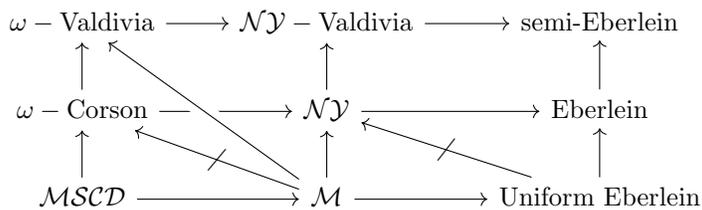
\begin{figure}[H]
\caption{Relations between certain classes of compacta. The symbols $\mathcal{MSCD}$, $\mathcal{M}$ and $\mathcal{NY}$ stand for the class of strongly countable-dimensional metrizable, metrizable and $NY$ compacta, respectively.}
\begin{tikzcd}
\omega-\mbox{Valdivia} \arrow[r]
&  \mathcal{NY}-\mbox{Valdivia} \arrow[r] & \mbox{semi-Eberlein} \\
\omega-\mbox{Corson} \arrow[r] \arrow[u]  &
\mathcal{NY} \arrow[r] \arrow[u]                          & \mbox{Eberlein} \arrow[u]                  \\
\mathcal{MSCD} \arrow[r] \arrow[u]             & \mathcal{M} \arrow[u] \arrow[r] \arrow[lu, "/" marking] \arrow[luu, crossing over]  & \mbox{Uniform Eberlein} \arrow[u] \arrow[lu, "/" marking]
\end{tikzcd}
\end{figure}

As we have already mentioned the Hilbert cube is not $\omega$-Corson, so compact metrizable spaces need not be $\omega$-Corson. Observe that uniform Eberlein compacta need not be $NY$ compact. A suitable example is the countable product $A(\omega_1)^\omega$ of the one-point compactification of a discrete set of size $\omega_1$. This space is uniform Eberlein yet it is not $NY$ compact (see, e.g., Theorem \ref{characterization_NY_1} below).
In Section 4, we will give examples showing that
\begin{enumerate}[(a)]
 \item uniform Eberlein compact spaces need not be $NY$-Valdivia and
 \item $NY$ compact spaces need not be $\omega$-Valdivia.
\end{enumerate}
In particular, this answers Question 6.4 in \cite{KL} and a recent question of H\'{a}jek and Russo \cite[Question 7.6 (2)]{HR}.
Our examples follow from Theorems \ref{theorem-duplicates} and \ref{theorem-duplicates-omega-Corson} below, where we identify all
compacta whose Alexandroff duplicate is $NY$-Valdivia (resp., $\omega$-Valdivia). A similar result concerning Alexandroff duplicates is also proved for the class of semi-Eberlein spaces (see Theorem \ref{semi-Eberlein duplicates} below). This gives a wealth of examples of Corson compact spaces that are not semi-Eberlein (cf. \cite[Example 5.5]{KL} and Remark \ref{remark_plenty of Corson non semi-Eb} below).
So the above diagram may be completed to the following effect:

\begin{figure}[H]
\begin{tikzcd}
\omega-\mbox{Valdivia} \arrow[r]
&  \mathcal{NY}-\mbox{Valdivia} \arrow[r] & \mbox{semi-Eberlein} \\
\omega-\mbox{Corson} \arrow[r] \arrow[u]  &
\mathcal{NY} \arrow[r, bend left=20] \arrow[u] \arrow[ul, "/" marking]                         & \mbox{Eberlein} \arrow[u]                   \\
\mathcal{MSCD} \arrow[r] \arrow[u]             & \mathcal{M} \arrow[u] \arrow[r] \arrow[lu, "/" marking] \arrow[luu, crossing over]  & \mbox{Uniform Eberlein} \arrow[u] \arrow[lu, "/" marking] \arrow[luu, "/" marking, crossing over]
\end{tikzcd}
\end{figure}

\bigskip

In section 5 we show that the class of $NY$ compact spaces is invariant under homeomorphisms of $C_p(X)$-spaces, i.e., spaces of continuous functions endowed with the pointwise topology (see Theorem \ref{t-equiv} below). This refines a recent result of Zakrzewski \cite{Z}, who proved analogous result under additional assumption that a homeomorphism in question is linear.

\section{Notation}

All spaces in this paper are assumed to be Tychonoff.
Recall that the weight of a space $X$ is the minimal size of a base for $X$. We say that $X$ is \textit{second-countable} if $X$ has countable weight.

\subsection{$\sigma$-products and $\Sigma$-product}

Let $\{X_\gamma:\gamma \in \Gamma\}$ be a family of spaces and let $a=(a_\gamma)_{\gamma\in \Gamma}$ be a point in the product $\prod_{\gamma\in \Gamma}X_\gamma$. Given
$x=(x_\gamma)_{\gamma\in \Gamma}\in \prod_{\gamma\in \Gamma}X_\gamma$ the support of $x$ with respect to $a$ is the set
$$\supp_a x=\{\gamma\in \Gamma: x_\gamma\neq a_\gamma\}.$$
If $X_\gamma=[0,1]$ for all $\gamma\in \Gamma$ and if $a=(0,0,\ldots)$, then we write $\supp x$ instead of $\supp_a x$.
Similarly if $x=(x_\gamma)_{\gamma\in \Gamma}\in \prod_{\gamma\in \Gamma} X_\gamma$ where  $X_\gamma=\{0,1\}^\omega$ for all $\gamma\in \Gamma$ or  $X_\gamma=[0,1]^\omega$ for all $\gamma\in \Gamma$, then we write
$$\supp x=\{\gamma\in \Gamma:x_\gamma\neq (0,0,\ldots)\}.$$
Given a family of topological spaces $\{X_\gamma : \gamma\in \Gamma\}$ and $a\in \prod_{\gamma\in \Gamma}X_\gamma$ by
$\sigma(\prod_{\gamma\in \Gamma} X_\gamma, a)$ we denote the \textit{$\sigma$-product} in $\prod_{\gamma\in \Gamma} X_\gamma$ centered at $a$ defined as
$$\sigma(\prod_{\gamma\in \Gamma} X_\gamma, a )=\{x\in \prod_{\gamma\in \Gamma} X_\gamma:|\supp_a x|<\omega\}.$$
In the case when every factor $X_\gamma$ is the unit interval $[0,1]$ or every factor $X_\gamma$ is the Cantor set $\{0,1\}^\omega$ or every factor $X_\gamma$ is the Hilbert cube $[0,1]^\omega$, we write
$$\sigma(\prod_{\gamma\in \Gamma} X_\gamma)=\{x\in \prod_{\gamma\in \Gamma} X_\gamma:|\supp x|<\omega\}.$$
If $n\in\omega$, then
$$\sigma_n(\prod_{\gamma\in \Gamma} X_\gamma)=\{x\in \prod_{\gamma\in \Gamma} X_\gamma:|\supp x|\leq n\}$$
is a subset of $\sigma(\prod_{\gamma\in \Gamma} X_\gamma)$ which is closed in the product $\prod_{\gamma\in \Gamma} X_\gamma$. Clearly, $$\bigcup_{n\in \omega} \sigma_n(\prod_{\gamma\in \Gamma} X_\gamma)=\sigma(\prod_{\gamma\in \Gamma} X_\gamma).$$

The \textit{$\Sigma$-products} in $\prod_{\gamma\in \Gamma}X_\gamma$ are defined similarly as collections of all elements of the product $\prod_{\gamma\in \Gamma}X_\gamma$ with countable support, i.e.,
\begin{align*}
 &\Sigma(\prod_{\gamma\in \Gamma} X_\gamma,a)=\{x\in \prod_{\gamma\in \Gamma} X_\gamma:|\supp_a x|\leq\omega\}
\quad \mbox{and} \\
&\Sigma(\prod_{\gamma\in \Gamma} X_\gamma)=\{x\in \prod_{\gamma\in \Gamma} X_\gamma:|\supp x|\leq\omega\}.
\end{align*}

Recall that a compact space which, for some $\Gamma$, is homeomorphic to a subspace of $\Sigma([0,1]^\Gamma)$ is called \textit{Corson compact}.
The following proposition says that in Definition \ref{def2} one can replace a family of arbitrary metrizable compacta by a family consisting of Hilbert cubes.

\begin{prop}\cite[Proposition 3.2]{MPZ}
Given a set $\Gamma$ and $\gamma\in \Gamma$, we let $Q_\gamma=[0,1]^\omega$ to be the Hilbert cube.
 A compact space $K$ is $NY$ compact if and only if $K$ embeds into
 $\sigma(\prod_{\gamma\in \Gamma}Q_\gamma)$, for some $\Gamma$.
\end{prop}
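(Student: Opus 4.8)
The plan is the following. The implication "\,$K$ embeds into $\sigma(\prod_{\gamma\in\Gamma}Q_\gamma)\Rightarrow K$ is $NY$ compact\," is immediate: each Hilbert cube $Q_\gamma=[0,1]^\omega$ is a compact metrizable space and the constant sequence $\mathbf 0=(0,0,\ldots)$ is a legitimate base point in $Q_\gamma$, so $\sigma(\prod_{\gamma\in\Gamma}Q_\gamma)$ is a particular instance of the $\sigma$-products occurring in Definition \ref{def2}. For the converse, fix an embedding $K\hookrightarrow\sigma(\prod_{\gamma\in\Gamma}X_\gamma,a)$ with all $X_\gamma$ compact metrizable and $a=(a_\gamma)_{\gamma\in\Gamma}$. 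The idea is to replace, coordinatewise, each factor $X_\gamma$ by a Hilbert cube through an embedding that sends the distinguished coordinate $a_\gamma$ \emph{precisely} to $\mathbf 0$, and then to take the product of all these embeddings.

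The one point that needs an argument is the following normalization: for every compact metrizable $X$ and every $x_0\in X$ there is an embedding $h\colon X\to Q=[0,1]^\omega$ with $h(x_0)=\mathbf 0$. I would prove this elementarily. Choose a sequence $(f_n)_{n\in\omega}$ in $C(X,[0,1])$ separating the points of $X$ (such a sequence exists since $X$ is a second countable compactum) and define $h\colon X\to[0,1]^\omega$ by $h(x)_{2n}=\max(f_n(x)-f_n(x_0),0)$ and $h(x)_{2n+1}=\max(f_n(x_0)-f_n(x),0)$ for $n\in\omega$. Then $h$ is continuous and $h(x_0)=\mathbf 0$. Moreover $h(x)_{2n}-h(x)_{2n+1}=f_n(x)-f_n(x_0)$ for every $n$ (exactly one of the two terms is nonzero), so $h(x)=h(y)$ forces $f_n(x)=f_n(y)$ for all $n$ and hence $x=y$; being a continuous injection of a compactum into a Hausdorff space, $h$ is an embedding. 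Alternatively, one may appeal to Keller's theorem that $Q$ is topologically homogeneous and post-compose an arbitrary embedding of $X$ into $Q$ with a self-homeomorphism of $Q$ carrying the image of $x_0$ to $\mathbf 0$.

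With this normalization lemma in hand the rest is routine. For each $\gamma\in\Gamma$ pick an embedding $h_\gamma\colon X_\gamma\to Q_\gamma$ with $h_\gamma(a_\gamma)=\mathbf 0$, and let $h=\prod_{\gamma\in\Gamma}h_\gamma\colon\prod_{\gamma\in\Gamma}X_\gamma\to\prod_{\gamma\in\Gamma}Q_\gamma$, which is again an embedding. If $x\in\sigma(\prod_{\gamma\in\Gamma}X_\gamma,a)$ then for every $\gamma\notin\supp_a x$ we have $x_\gamma=a_\gamma$ and hence $h_\gamma(x_\gamma)=\mathbf 0$; therefore $\supp h(x)\subseteq\supp_a x$ is finite, i.e. $h(x)\in\sigma(\prod_{\gamma\in\Gamma}Q_\gamma)$. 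Thus $h$ restricts to an embedding of $\sigma(\prod_{\gamma\in\Gamma}X_\gamma,a)$ into $\sigma(\prod_{\gamma\in\Gamma}Q_\gamma)$, and composing it with the given embedding of $K$ yields $K\hookrightarrow\sigma(\prod_{\gamma\in\Gamma}Q_\gamma)$. The only genuine obstacle here is the base-point normalization of the lemma; once the distinguished point of each factor has been moved to the zero corner, the $\sigma$-product structure is automatically preserved by the product map and nothing further is required.
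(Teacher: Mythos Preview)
Your proof is correct. Note, however, that the paper does not give its own proof of this proposition: it is merely quoted from \cite{MPZ}, so there is nothing in the present paper to compare against. Your argument is the natural one and is essentially what one finds in \cite{MPZ}: embed each factor $X_\gamma$ into the Hilbert cube sending the base point $a_\gamma$ to $\mathbf 0$, then take the product map. Your explicit normalization lemma via $h(x)_{2n}=\max(f_n(x)-f_n(x_0),0)$, $h(x)_{2n+1}=\max(f_n(x_0)-f_n(x),0)$ is a clean elementary alternative to invoking Keller's homogeneity theorem; the small parenthetical ``exactly one of the two terms is nonzero'' should read ``at most one'', but the identity $h(x)_{2n}-h(x)_{2n+1}=f_n(x)-f_n(x_0)$ holds regardless and the injectivity argument is unaffected.
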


A compact space $K$ is called \textit{Valdivia compact} if, for some $\Gamma$, there is an embedding $h:K\to [0,1]^\Gamma$ such that the set $h(K)\cap \sigma([0,1]^\Gamma)$ is dense in $h(K)$. This motivates the following definitions:

\begin{defin}
A compact space $K$ is \textit{$\omega$-Valdivia} if, for some $\Gamma$, there is an embedding $h:K\to [0,1]^\Gamma$ such that the set $h(K)\cap \sigma([0,1]^\Gamma)$ is dense in $h(K)$.
\end{defin}

\begin{defin}
A compact space $K$ is \textit{$NY$-Valdivia} if, for some $\Gamma$, there is an embedding $h:K\to \prod_{\gamma\in \Gamma}Q_\gamma$ such that the set $h(K)\cap \sigma(\prod_{\gamma\in \Gamma}Q_\gamma)$ is dense in $h(K)$.
\end{defin}

\subsection{Point-finite and point-countable families}
Let $\mathscr{A}$ be a family of subsets of a space $X$. We say that $\mathscr{A}$ is \textit{point-finite} (\textit{point-countable}) in $X$ if for any $x\in X$ the collection $\{A\in \mathscr{A}:x\in A\}$ is finite (countable). Let $\mathscr{U}$ be a cover of a space $X$. We say that a cover $\mathscr{V}$ of $X$ is a \textit{refinement} of $\mathscr{U}$ if for every $V\in \mathscr{V}$ there is $U\in \mathscr{U}$ with $V\subseteq U$.
A space $X$ is \textit{metacompact} (\textit{metalindel\"of}) if every open cover of $X$ has a point-finite (point-countable) open refinement. If every subspace of $X$ is metacompact (metalindel\"of) then we say that $X$ is \textit{hereditarily metacompact} (\textit{hereditarily metalindel\"of}).


\section{Characterizing the class of NY and $\omega$-Corson compacta}

There are various known characterizations of $NY$ compacta that can be found in the literature.
Theorem \ref{characterization_NY_1} below gathers two such descriptions due to Nakhmanson and Yakovlev \cite{NY} (see condition $(ii)$ of Theorem \ref{characterization_NY_1})
and Marciszewski Plebanek and Zakrzewski \cite{MPZ} (see condition $(iii)$ of Theorem \ref{characterization_NY_1}). In order to formulate this result we need to fix some notation first.

Let us recall that that a family $\mathscr{U}$ of subsets of a space $X$ is called $T_0$-separating if for any distinct $x,y\in X$ there is $U\in \mathscr{U}$ with $|U\cap\{x,y\}|=1$. It will be convenient to introduce the following definitions:

\begin{defin}
A family $\mathscr{U}$ of subsets of a space $X$ is called \textit{block-point-finite} if for some set $\Gamma$ one can write $\mathscr{U}=\bigcup\{\mathscr{U}_\gamma:\gamma\in \Gamma\}$ where each family $\mathscr{U}_\gamma$ is countable and the family $\{\bigcup\mathscr{U}_\gamma:\gamma\in \Gamma\}$ is point-finite in $X$.
\end{defin}

\begin{defin}
A space $X$ is called \textit{M-scattered} if every nonempty subspace $A$ of $X$ contains a nonempty relatively open subset of countable weight.
\end{defin}

If a compact space $K$ is M-scattered then for any ordinal number $\alpha$ we define the $\alpha$-th M-derivative $K^{(\alpha)}$ of $K$ as follows (see \cite[section 4]{MPZ}):
\begin{itemize}
 \item $K'=K^{(1)}=K\setminus\bigcup\{U\subseteq K: U \mbox{ is open and second-countable}\}$
 \item $K^{(\alpha+1)}=(K^{(\alpha)})'$
 \item $K^{(\alpha)}=\bigcap_{\beta<\alpha} K^{(\beta)}$ if $\alpha$ is a limit ordinal.
\end{itemize}

We define the M-height $Mht(K)$ of $K$ as
$Mht(K)=\min\{\alpha:X^{(\alpha)}=\emptyset\}$. Note that if $K$ is a nonempty compact space, then we have $Mht(K)=\gamma+1$ for some ordinal number $\gamma$, i.e., the M-height of $K$ is a successor ordinal.

\begin{thrm}\label{characterization_NY_1}(\cite{NY},\cite{MPZ})
 The following conditions are equivalent for any compact space $K$:
 \begin{enumerate}[(i)]
  \item $K$ is $NY$ compact,
  \item there is a $T_0$-separating block-point-finite family consisting of open $F_\sigma$-subsets of $K$
  \item $K$ is hereditarily metacompact and M-scattered.
 \end{enumerate}
\end{thrm}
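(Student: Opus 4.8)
The plan is to establish the cycle $(i)\Rightarrow(iii)\Rightarrow(ii)\Rightarrow(i)$. Throughout I invoke \cite[Proposition 3.2]{MPZ}, so that ``$K$ is $NY$ compact'' may be read as ``$K$ embeds into $\sigma(\prod_{\gamma\in\Gamma}Q_\gamma)$'' with each $Q_\gamma=[0,1]^\omega$ centred at the zero sequence, and I freely use that, $K$ being a compact (hence normal) space, an open subset of $K$ is a cozero set if and only if it is $F_\sigma$. Of the three implications, $(ii)\Rightarrow(i)$ is soft, $(i)\Rightarrow(iii)$ is of intermediate difficulty, and $(iii)\Rightarrow(ii)$ carries the real content. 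For $(ii)\Rightarrow(i)$: write the given family as $\mathscr U=\bigcup_{\gamma\in\Gamma}\mathscr U_\gamma$ with $\mathscr U_\gamma=\{U_{\gamma,n}:n\in\omega\}$, pick continuous $f_{\gamma,n}\colon K\to[0,1]$ with $U_{\gamma,n}=f_{\gamma,n}^{-1}((0,1])$, bundle them into $\varphi_\gamma=(f_{\gamma,n})_n\colon K\to Q_\gamma$, and set $\Phi=(\varphi_\gamma)_\gamma\colon K\to\prod_\gamma Q_\gamma$. Then $\supp\Phi(x)=\{\gamma:x\in\bigcup\mathscr U_\gamma\}$ is finite by point-finiteness of $\{\bigcup\mathscr U_\gamma:\gamma\in\Gamma\}$, so $\Phi(K)\subseteq\sigma(\prod_\gamma Q_\gamma)$; $\Phi$ is injective because $\mathscr U$ is $T_0$-separating, and continuous on a compactum, hence an embedding.

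For $(i)\Rightarrow(iii)$, assume $K\subseteq\sigma(\prod_\gamma Q_\gamma)$. For \emph{M-scatteredness}, let $\emptyset\ne A\subseteq K$; choose $x\in A$ with $|\supp x|$ as small as possible (if $0\in A$ and $A\ne\{0\}$, minimal among points of $A$ with nonempty support), set $S=\supp x$ and $V=\{y\in K:y_\gamma\ne 0\text{ for all }\gamma\in S\}$. Then $V$ is open, $x\in V$, $0\notin V$ when $S\ne\emptyset$, and minimality forces $\supp y=S$ for every $y\in A\cap V$; hence $A\cap V$ embeds into the metrizable compactum $\prod_{\gamma\in S}Q_\gamma$ and is a nonempty relatively open subset of $A$ of countable weight (the case $A=\{0\}$ being trivial). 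For \emph{hereditary metacompactness} one uses that subspaces of $\sigma$-products of second-countable spaces are hereditarily metacompact: this is proved by induction on $n$ that $\sigma_n(\prod_\gamma Q_\gamma)$ is hereditarily metacompact -- the open remainder $\sigma_n\setminus\sigma_{n-1}$ is, near each point of support size exactly $n$, a copy of an open subset of a finite product of Hilbert cubes, hence hereditarily metacompact, while $\sigma_{n-1}$ is a closed subspace covered by the inductive hypothesis and a point-finite refinement is glued across it by a standard lemma -- together with a final $\omega$-indexed step using $\sigma(\prod_\gamma Q_\gamma)=\bigcup_n\sigma_n$ (closed, increasing), which yields that every subspace, in particular $K$, is metacompact.

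For $(iii)\Rightarrow(ii)$, let $Mht(K)=\mu+1$ and consider the M-derivatives $K=K^{(0)}\supseteq K^{(1)}\supseteq\cdots$, each closed in $K$, with layers $O_\alpha:=K^{(\alpha)}\setminus K^{(\alpha+1)}$ for $\alpha\le\mu$. Each $O_\alpha$ is open in the compactum $K^{(\alpha)}$ and is covered by relatively open second-countable sets; such a set is metrizable (Urysohn), locally compact and $\sigma$-compact (being second countable and open in a compactum), hence cozero in $K^{(\alpha)}$, and a cozero subset of an open cozero subspace of $K^{(\alpha)}$ is again cozero in $K^{(\alpha)}$. By hereditary metacompactness of $K$, fix for each $\alpha$ a point-finite (in $K^{(\alpha)}$) open refinement $\mathscr P_\alpha$ of this cover by second-countable (hence cozero-in-$K^{(\alpha)}$) sets, and for each $P\in\mathscr P_\alpha$ a countable $T_0$-separating family $\mathscr C_P$ of cozero subsets of $P$. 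Since $K^{(\alpha)}$ is closed in the normal space $K$, each of these cozero subsets of $K^{(\alpha)}$ extends to a cozero subset of $K$ with the prescribed trace on $K^{(\alpha)}$; let $\mathscr U_{(\alpha,P)}$ consist of the extensions of $\{P\}\cup\mathscr C_P$ (each intersected with the extension of $P$, so as to lie inside it), and put $\mathscr U=\bigcup_{\alpha\le\mu}\bigcup_{P\in\mathscr P_\alpha}\mathscr U_{(\alpha,P)}$. That $\mathscr U$ is $T_0$-separating is immediate: for $x\ne y$ let $\alpha$ be the smaller of their M-levels, so that both lie in $K^{(\alpha)}$ while, say, $x\in O_\alpha$ lies in some $P\in\mathscr P_\alpha$; then the extension of $P$ (whose trace on $K^{(\alpha)}$ is exactly $P$) separates them if $y\notin P$, and otherwise some member of $\mathscr C_P$ does.

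The step I expect to be the main obstacle is verifying that $\{\bigcup\mathscr U_{(\alpha,P)}:\alpha\le\mu,\ P\in\mathscr P_\alpha\}$ is point-finite in $K$. For $z$ of M-level $\delta$ the scattered structure settles two regimes at once: if $\alpha<\delta$ then $z\in K^{(\alpha+1)}$ while the relevant sets meet $K^{(\alpha)}$ only inside $O_\alpha=K^{(\alpha)}\setminus K^{(\alpha+1)}$, so $z$ lies in none of them; and if $\alpha=\delta$ then $z\in O_\delta$ lies in only finitely many $P\in\mathscr P_\delta$, and on $K^{(\delta)}$ membership in the extension of $P$ coincides with membership in $P$. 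The delicate regime is $\alpha>\delta$: there $z\notin K^{(\alpha)}$, so the cozero extensions are no longer pinned down by their traces on $K^{(\alpha)}$. Handling it forces the construction to be carried out by transfinite recursion on $\alpha$, at each stage using normality together with metacompactness of the whole space $K$ (not merely of $K^{(\alpha)}$) to keep the newly chosen cozero sets small -- confined to a point-finite system of open neighbourhoods of the layers that is consistent with the sets already produced at earlier stages -- so that global point-finiteness is preserved. This bookkeeping is the heart of the argument; once it is in place, $\mathscr U$ witnesses $(ii)$ and the cycle closes.
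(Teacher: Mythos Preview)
The paper does not prove this theorem: it is stated as a known result from \cite{NY} and \cite{MPZ}, so there is no in-paper argument to compare against. I assess your proposal on its own terms.

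Your M-scatteredness argument in $(i)\Rightarrow(iii)$ is wrong. Choosing $x\in A$ of \emph{minimal} support and setting $V=\{y:y_\gamma\ne 0\text{ for all }\gamma\in S\}$ with $S=\supp x$ does not force $\supp y=S$ for $y\in A\cap V$: minimality gives $|\supp y|\ge|S|$ and membership in $V$ gives $\supp y\supseteq S$, but nothing bounds $\supp y$ from above. Concretely, fix $\gamma_0\in\Gamma$ and let $A\subseteq\sigma_2(\prod_\gamma Q_\gamma)$ consist of one point $p$ with $\supp p=\{\gamma_0\}$ together with, for each $\gamma\ne\gamma_0$, a point $q_\gamma$ with $\supp q_\gamma=\{\gamma_0,\gamma\}$ and the same $\gamma_0$-coordinate as $p$. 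Then $A$ is a compact copy of the one-point compactification of a discrete set of size $|\Gamma|$; your procedure selects $x=p$, $S=\{\gamma_0\}$, and returns $A\cap V=A$, which for uncountable $\Gamma$ is not second-countable. The repair is to first apply the Baire category theorem to $\overline{A}=\bigcup_n(\overline{A}\cap\sigma_n)$, obtaining a nonempty relatively open $U\subseteq\overline{A}\cap\sigma_n$, and then pick $x\in U$ of \emph{maximal} support; maximality now supplies the missing upper bound and $U\cap V$ works.

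Your $(iii)\Rightarrow(ii)$ is a programme, not a proof. You correctly locate the obstruction --- for $z$ of M-level $\delta$, the cozero extensions produced at stages $\alpha>\delta$ are unconstrained off $K^{(\alpha)}$ and may all contain $z$ --- but the ``bookkeeping'' meant to resolve it is precisely the content of the implication and is nowhere carried out. The claim that one can recursively confine the new cozero sets to a point-finite system of neighbourhoods of the layers, coherently through a transfinite recursion of arbitrary length, is unsupported; hereditary metacompactness alone does not obviously yield such a coherent system. As written, the cycle does not close.
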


The purpose of this section is to add yet another three equivalent conditions to the above list. We will prove the following:

\begin{thrm}\label{characterization_NY_2}
The following conditions are equivalent for any compact space $K$:
\begin{enumerate}[(i)]
 \item $K$ is $NY$ compact
 \item $K$ is Corson compact and M-scattered
 \item $K$ is Eberlein compact and M-scattered.
 \item $K$ is hereditarily metalindel\"of and M-scattered.
\end{enumerate}
\end{thrm}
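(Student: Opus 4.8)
The plan is to establish the cycle $(i)\Rightarrow(iii)\Rightarrow(ii)\Rightarrow(iv)\Rightarrow(i)$, using Theorem~\ref{characterization_NY_1} as the bridge between the routine half and the substantial half. The implications $(i)\Rightarrow(iii)\Rightarrow(ii)\Rightarrow(iv)$ are soft. Every $NY$ compact space is Eberlein (as noted in the introduction), and by the equivalence $(i)\Leftrightarrow(iii)$ of Theorem~\ref{characterization_NY_1} it is M-scattered, which gives $(i)\Rightarrow(iii)$. Every Eberlein compact space is Corson, since the standard set witnessing Eberleinness inside $[0,1]^\Gamma$ is contained in $\Sigma([0,1]^\Gamma)$; this gives $(iii)\Rightarrow(ii)$. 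Finally $(ii)\Rightarrow(iv)$ is the classical fact that Corson compacta are hereditarily metalindel\"of. Since M-scatteredness is literally the same property in all four items, nothing has to be verified for it along the chain.

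So the whole content lies in $(iv)\Rightarrow(i)$: a compact, hereditarily metalindel\"of, M-scattered space $K$ is $NY$ compact. By $(i)\Leftrightarrow(iii)$ of Theorem~\ref{characterization_NY_1} it suffices to show that $K$ is hereditarily metacompact, and since every subspace of $K$ is metalindel\"of and M-scattered this reduces to the purely topological statement: \emph{every metalindel\"of, M-scattered space is metacompact}. (On such a space the M-derivative is defined: the transfinite chain of M-derivatives is decreasing and consists of closed sets, hence stabilizes, and M-scatteredness forces the stable value to be $\emptyset$, so $Mht$ is well defined.) I would prove this by transfinite induction on $Mht(X)$. The base case is $Mht(X)\le 1$, i.e.\ $X$ locally second countable: take a point-countable open refinement $\mathscr{V}$ of a cover by second countable open sets, so every member of $\mathscr{V}$ is second countable and hence meets only countably many other members (a point-countable family of nonempty open subsets of a second countable space is countable); group the members of $\mathscr{V}$ by the equivalence relation ``linked by a finite chain of pairwise intersections''. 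Each class is countable, its union is clopen, Lindel\"of and regular, hence paracompact, and reassembling over this disjoint clopen decomposition yields a locally finite open refinement. For the inductive step with $Mht(X)=\delta+1$, write $X=F\cup G$ with $F=X^{(\delta)}$ closed and locally second countable (hence paracompact by the base case) and $G=X\setminus F$ open with $G^{(\delta)}=X^{(\delta)}\cap G=\emptyset$, so $G$ has strictly smaller M-height and is metacompact by the inductive hypothesis; a refinement of a given open cover of $X$ is then assembled level by level along the M-stratification $\{X^{(\alpha)}\setminus X^{(\alpha+1)}\}_\alpha$.

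The main obstacle is precisely this last assembly: forcing the globally glued refinement to be point-finite. The difficulty is genuine, since an open-in-$X$ set covering a point of a level $X^{(\alpha)}\setminus X^{(\alpha+1)}$ inevitably spills into every strictly lower level (it cannot be contained in the closed set $X^{(\alpha)}$), and when $Mht(K)$ is infinite a single point of a low level can be met by members coming from infinitely many higher levels, so one cannot merely add up a point-finite family per level. The cure exploits the extra strength delivered by the base case -- at each level the chosen refinement is not just point-finite but \emph{locally finite}, with every member having a small closure missing $X^{(\alpha+1)}$ -- together with the fact that the closure of $X^{(\alpha)}\setminus X^{(\alpha+1)}$ lies inside $X^{(\alpha)}$; the transfinite recursion must be bookkept so that these local-finiteness data combine. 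Equivalently, translating into the language of Theorem~\ref{characterization_NY_1}$(ii)$, the $T_0$-separating family of cozero subsets of $K$ obtained by extending (via Tietze's theorem, using that the closures of the second countable pieces are compact metrizable) countable separating families at each level must be arranged to be block-point-finite; carrying out this bookkeeping is the technical heart of the argument.
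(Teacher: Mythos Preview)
Your easy implications $(i)\Rightarrow(iii)\Rightarrow(ii)\Rightarrow(iv)$ are fine and agree with the paper. The difference is in closing the loop. You try $(iv)\Rightarrow(i)$ directly by proving that every (Tychonoff) metalindel\"of M-scattered space is metacompact, then invoking Theorem~\ref{characterization_NY_1}. The paper instead goes $(iv)\Rightarrow(ii)$ (Proposition~\ref{Proposition-Yakovlev}, an induction on $Mht$ producing a point-countable $T_0$-separating family) and then $(ii)\Rightarrow(i)$ by pulling $K\subseteq\Sigma([0,1]^\Gamma)$ back through the standard surjection $(\{0,1\}^\omega)^\Gamma\to[0,1]^\Gamma$ to a \emph{zero-dimensional} Corson M-scattered compactum, and applying an Alster-type lemma (Lemma~\ref{lemma_Alster}, Proposition~\ref{characterization_zero_dim}).

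Your base case (locally second countable, metalindel\"of, regular $\Rightarrow$ paracompact) is correct and nicely argued. The gap is the inductive step. You state the obstacle accurately---open sets covering a level $X^{(\alpha)}\setminus X^{(\alpha+1)}$ must spill into all lower levels, so a point at the bottom can meet sets from infinitely many levels---but you do not overcome it. Saying that the per-level refinements are locally finite with closures missing $X^{(\alpha+1)}$ does not help: local finiteness is a property in the \emph{relative} topology of $X^{(\alpha)}\setminus X^{(\alpha+1)}$, and it is lost the moment you extend those sets to open subsets of $X$; there is no mechanism in your sketch preventing a fixed point $x\in X\setminus X^{(1)}$ from lying in one extended set per level for infinitely many levels. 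The final sentence (``the transfinite recursion must be bookkept \ldots\ carrying out this bookkeeping is the technical heart'') is an acknowledgement that the argument is not there. You also only treat successor $Mht$; for non-compact subspaces the M-height can be a limit ordinal.

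What the paper supplies for exactly this ``bookkeeping'' is Alster's trick (Lemma~\ref{lemma_Alster}): in the zero-dimensional compact setting one refines a point-countable family of M-scattered \emph{clopens} to a point-finite one by induction on the \emph{cardinality} of the family, using that the top layer $Z(\bigcap\mathscr{F})$ of each finite intersection is separable to control how many members can meet it. That argument does not translate into your induction on $Mht$ for arbitrary open covers of non-compact spaces; it genuinely uses clopenness and compactness, which is why the paper first reduces to the zero-dimensional case. If you want to salvage your route, you would need either a direct proof that metalindel\"of $+$ M-scattered $\Rightarrow$ metacompact that handles the cross-level accumulation (this is the real content and is not in your write-up), or to import the Alster lemma---at which point you are essentially doing the paper's proof.
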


Our proof is based on the ideas of Alster from \cite{A}. In fact, our reasoning is a minor modification of the one given in \cite{A}.

The proof of the next lemma is essentially the same as the proof of \cite[Proposition]{A}). We enclose the argument for the convenience of the reader.

\begin{lemma}\label{lemma_Alster}
 Let $\mathscr{U}$ be a point-countable family of M-scattered clopen subsets of a compact space $K$. There exists a point-finite family $\mathscr{V}$ of clopens in $K$ such that $\mathscr{V}$ refines $\mathscr{U}$ and $\bigcup\mathscr{U}=\bigcup\mathscr{V}$.
\end{lemma}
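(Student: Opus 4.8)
The plan is to mimic Alster's argument for reducing a point-countable clopen cover to a point-finite refinement, using the M-scatteredness to handle the combinatorics. Recall the setting: $\mathscr{U}$ is a point-countable family of M-scattered clopen subsets of a compact space $K$; I want a point-finite family $\mathscr{V}$ of clopens refining $\mathscr{U}$ with $\bigcup\mathscr{V}=\bigcup\mathscr{U}$. Since clopen subsets of $K$ are compact and M-scatteredness passes to closed subspaces, each $U\in\mathscr{U}$ is itself an M-scattered compact space, so it has a well-defined M-height which is a successor ordinal. The proof will proceed by transfinite induction on $\alpha = \sup\{Mht(U): U\in\mathscr{U}\}$, or more precisely by a simultaneous bookkeeping that peels off, from each $U$, the open second-countable piece $U\setminus U'$ and recurses on the closed remainder $U'$ (which has strictly smaller M-height).

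The key steps, in order: (1) For each $U\in\mathscr{U}$, set $U^{(1)}=U'$ its first M-derivative, a closed (hence clopen, being closed in clopen $U$ — wait, $U'$ need not be open in $K$; it is closed in $K$). The set $U\setminus U'$ is open in $U$ and second-countable. (2) Cover $\bigcup\mathscr{U}\setminus\bigcup_{U}U'$ — a subset of $\bigcup_U(U\setminus U')$ — using the second-countability: each $U\setminus U'$ is Lindel\"of, so from the clopen (in $K$) traces one can extract, for each $U$, a \emph{countable} subfamily of small clopen sets contained in $U$ that covers $U\setminus U'$ modulo the other derivatives, and since $\mathscr{U}$ is point-countable each point lies in only countably many $U$, so the union of all these countable subfamilies is still point-countable; now apply the classical fact (Lindel\"of-type or the standard ``point-countable + each set Lindel\"of $\Rightarrow$ reduce to point-finite'' trick, exactly as in Alster) to thin this out to a point-finite clopen family. (3) Then handle the remainder: the family $\{U': U\in\mathscr{U}\}$ is again point-countable, consists of M-scattered closed sets of strictly smaller M-height; but they need not be clopen, so first enlarge each $U'$ to a clopen $W_U\subseteq U$ with $W_U\supseteq U'$, small enough to stay within a point-countable regime, and apply the induction hypothesis to $\{W_U\}$. (4) Combine the point-finite family from step (2) with that from step (3); check it refines $\mathscr{U}$ and has the same union. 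Transfinite induction closes at limit stages because the M-derivatives stabilize and $Mht$ is always a successor.

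The main obstacle — and the place where care is genuinely needed — is step (2)/(3): arranging that when one replaces each $U$ by a countable clopen cover of its second-countable ``top layer'' plus a clopen neighborhood $W_U$ of the remainder $U'$, the resulting families remain point-countable (so that the inductive machinery and the point-finiteness reduction apply), and that the clopen neighborhoods $W_U$ can be chosen so their M-height genuinely drops. Point-countability of the enlarged family is delicate because a single point of $K$ could a priori acquire new membership through the $W_U$'s; the fix, following Alster, is to use compactness of $K$ to choose the clopen pieces carefully (e.g.\ refining a finite subcover at each stage) so that no point is covered more than countably often. I would also need the standard lemma that a point-countable open (clopen) family of Lindel\"of sets can be refined to a point-finite one with the same union — this is where Alster's ``Proposition'' is used verbatim, and I would cite or reprove it as a one-paragraph transfinite-recursion argument on the ordinal rank of the point-count function.

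Once these pieces are in place, gluing is routine: $\mathscr{V}$ is the union of the point-finite family covering the top layers (whose members are clopen and contained in individual $U$'s, hence refine $\mathscr{U}$) and the inductively obtained point-finite family refining $\{W_U\}$ (hence refining $\mathscr{U}$ since $W_U\subseteq U$); their union covers all of $\bigcup\mathscr{U}$ because the top layers and the $W_U\supseteq U'$ together cover each $U$; and the union of two point-finite families is point-finite. I expect the entire write-up to be short precisely because, as the authors note, it is a minor modification of \cite{A}.
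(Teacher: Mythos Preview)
Your plan has two concrete errors that block the induction on M-height.

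First, the claim that ``$U\setminus U'$ is open in $U$ and second-countable'' is false. By definition $U\setminus U'$ is the union of \emph{all} open second-countable subsets of $U$; this union is only \emph{locally} second-countable. Take $U=A(\omega_1)$: then $U'=\{\infty\}$ and $U\setminus U'$ is an uncountable discrete set, neither second-countable nor Lindel\"of. So the ``each $U\setminus U'$ is Lindel\"of, extract a countable clopen subfamily'' step in (2) does not get off the ground.

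Second, the assertion that $\{U':U\in\mathscr{U}\}$ (or the clopen $W_U\supseteq U'$) consists of sets of \emph{strictly smaller} M-height is false once heights are infinite. If $Mht(U)=\omega+1$, then $(U')^{(n)}=U^{(n+1)}$ for finite $n$, so $(U')^{(\omega)}=\bigcap_n U^{(n+1)}=U^{(\omega)}\neq\emptyset$, whence $Mht(U')=\omega+1=Mht(U)$. Any clopen $W_U$ with $U'\subseteq W_U\subseteq U$ then also has $Mht(W_U)=Mht(U)$, and the inductive hypothesis does not apply. Peeling off one M-derivative at a time cannot drive a transfinite induction on M-height.

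The paper's proof avoids both problems by inducting on the \emph{cardinality} $\kappa=|\mathscr{U}|$ instead. The base case (countable $\mathscr{U}$) is disjointification. For the step, one builds an increasing continuous chain $(\mathscr{U}_\beta)_{\beta<\kappa}$ of subfamilies, each of size $<\kappa$, by closing under ``all $U\in\mathscr{U}$ that meet $Z(\bigcap\mathscr{F})$ for some finite $\mathscr{F}\subseteq\mathscr{U}_\eta$'', where $Z(F)$ denotes the \emph{top} M-derivative $F^{(\gamma)}$ (compact metrizable, hence separable). Point-countability of $\mathscr{U}$ and separability of each $Z(\bigcap\mathscr{F})$ keep $|\mathscr{U}_\beta|\leq|\beta|\cdot\omega$. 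Applying the inductive hypothesis to each slice $\mathscr{W}_\beta=\mathscr{U}_{\beta+1}\setminus\mathscr{U}_\beta$ yields point-finite $\mathscr{V}_\beta$, and the union $\mathscr{V}=\bigcup_\beta\mathscr{V}_\beta$ is shown to be point-finite by a short argument: if some $x$ lay in $V_n\in\mathscr{V}_{\beta_n}$ for $\beta_1<\beta_2<\cdots$, pick $W_n\in\mathscr{W}_{\beta_n}$ with $V_n\subseteq W_n$; the M-heights of the nested intersections $W_1\cap\cdots\cap W_n$ eventually stabilize, forcing some later $W_{m+2}$ to meet $Z(W_1\cap\cdots\cap W_m)$ and hence to lie in $\mathscr{U}_{\beta_m+2}\subseteq\mathscr{U}_{\beta_{m+2}}$, contradicting $W_{m+2}\in\mathscr{W}_{\beta_{m+2}}$.

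Note also that the ``standard lemma'' you invoke in step (2)---refining a point-countable clopen family of second-countable sets to a point-finite one---is exactly the $Mht=1$ case of the lemma you are proving, and already requires this cardinality induction; it is not a black box you can cite.
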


\begin{proof}
 We proceed by induction on the cardinality of the family $\mathscr{U}$. If $\mathscr{U}=\{U_1,U_2,\ldots\}$ is countable then we put $V_1=U_1$ and $V_n=U_n\setminus \bigcup\{U_k:k<n\}$ for $n>1$. The family $\mathscr{V}=\{V_1,V_2,\ldots\}$ is point-finite being pairwise disjoint.

 Fix an uncountable cardinal $\kappa$. Suppose that the lemma holds if $|\mathscr{U}|<\kappa$ and let
 us assume that $\mathscr{U}=\{U_\alpha:\alpha<\kappa\}$ is of size $\kappa$. For a compact M-scattered set $F$ we define
\begin{equation*}
Z(F)=
  \left\{\begin{aligned}
  &\emptyset &&\mbox{if }F=\emptyset\\
&F^{(\gamma)}  &&\mbox{if }F\neq \emptyset\mbox{ and } Mht(F)=\gamma+1.
\end{aligned}
 \right.
\end{equation*}
Clearly, $Z(F)$ is always a compact metrizable space.
If $\mathscr{W}$ is a family of compact M-scattered sets, then we set
$$Z(\mathscr{W})=\bigcup\left\{Z\left(\bigcap\mathscr{F}\right): \mathscr{F}\in [\mathscr{W}]^{<\omega}\right\}.$$
Define an increasing sequence $\{\mathscr{U}_\beta:\beta<\kappa\}$ of subfamilies of $\mathscr{U}$ as follows:
\begin{equation*}
 \mathscr{U}_\beta=\left\{\begin{aligned}
                & \{U_\alpha:\alpha\leq\beta\}\cup \{U\in \mathscr{U}:U\cap Z(\mathscr{U}_\eta)\neq\emptyset\} && \mbox{ if } \beta=\eta+1\\
                & \bigcup\{\mathscr{U}_\alpha:\alpha<\beta\} && \mbox{ if } \beta \mbox{ is a limit ordinal.}
               \end{aligned}
\right.
\end{equation*}
It is readily seen that
\begin{equation}\label{union is U}
 \mathscr{U}=\bigcup\{\mathscr{U}_\beta:\beta<\kappa\}
\end{equation}

Let us show the following:

\begin{claim}
If $\beta<\kappa$, then $|\mathscr{U}_\beta|\leq |\beta|\cdot\omega<\kappa.$
\end{claim}
\begin{proof}
We will prove the claim inductively with respect to $\beta<\kappa$. If $\beta$ is a limit ordinal then $\mathscr{U}_\beta=\bigcup\{\mathscr{U}_\alpha:\alpha<\beta\}$ and we are done. Suppose that $\beta=\eta+1$ for some ordinal $\eta$ and suppose that the claim holds true for $\eta$, i.e. $|\mathscr{U}_\eta|\leq|\eta|\cdot\omega$. Since
$$\mathscr{U}_\beta= \{U_\alpha:\alpha\leq\beta\}\cup \{U\in \mathscr{U}:U\cap Z(\mathscr{U}_\eta)\neq\emptyset\},$$
it suffices to show that
\begin{equation}\label{eq_claim}
 |\{U\in \mathscr{U}:U\cap Z(\mathscr{U}_\eta)\neq\emptyset\}|\leq |\eta|\cdot\omega.
\end{equation}

Note that if $\mathscr{F}$ is a finite subfamily of $\mathscr{U_\eta}$, then the set $Z\left(\bigcap\mathscr{F}\right)$ is separable, being compact metrizable. For $\mathscr{F}\in [\mathscr{U}_\eta]^{<\omega}$ fix a countable dense subset $D({\mathscr{F}})$ of $Z\left(\bigcap\mathscr{F}\right)$ and put
$$D=\bigcup\left\{D(\mathscr{F}):\mathscr{F}\in [\mathscr{U}_\eta]^{<\omega}\right\}.$$

Clearly, $D$ is dense in $Z(\mathscr{U}_\eta)$ and from $|\mathscr{U_\eta}|\leq |\eta|\cdot\omega$ we get
$|D|\leq|\eta|\cdot\omega$. Hence,
$$\{U\in \mathscr{U}:U\cap Z(\mathscr{U}_\eta)\neq\emptyset\}=\{U\in \mathscr{U}:U\cap D\neq \emptyset\}=\bigcup_{x\in D}\{U\in \mathscr{U}:x\in U\}.$$
Since the family $\mathscr{U}$ is point-countable and $|D|\leq |\eta|\cdot\omega$, the cardinality of the latter family does not exceed $|\eta|\cdot\omega$. This gives \eqref{eq_claim} and finishes the proof of the claim.
\end{proof}

For each $\beta<\kappa$ define

$$\mathscr{W}_\beta=\mathscr{U}_{\beta+1}\setminus \mathscr{U}_\beta.$$
Using \eqref{union is U} and definitions of the families $\mathscr{U}_\beta$ and $\mathscr{W}_\beta$, it is easy to see that
$$\mathscr{U}=\bigcup\{\mathscr{W}_\beta:\beta<\kappa\}.$$ Moreover, by Claim, $|\mathscr{W}_\beta|<\kappa$ for all $\beta<\kappa$.
Applying the inductive assumption to $\mathscr{W}_\beta$ we find a point-finite family $\mathscr{V}_\beta$ consisting of clopen subsets of $K$ such that $\mathscr{V}_\beta$ refines $\mathscr{W}_\beta$ and $\bigcup \mathscr{V}_\beta=\bigcup \mathscr{W}_\beta$. Let
$$\mathscr{V}=\bigcup\{\mathscr{V}_\beta:\beta<\kappa\}.$$
It is readily seen that $\mathscr{V}$ refines $\mathscr{U}$ and $\bigcup\mathscr{U}=\bigcup\mathscr{V}$. It remains to show that $\mathscr{V}$ is point-finite.

Suppose to the contrary that $\mathscr{V}$ is not point-finite and fix $x\in K$ witnessing this. Since each family $\mathscr{V}_\beta$ is point-finite, there is a sequence of ordinals $\beta_1<\beta_2<\ldots$ and distinct sets $V_n\in \mathscr{V}_{\beta_n}$ such that $x\in \bigcap_{n=1}^\infty V_n$. Since $\mathscr{V}_{\beta_n}$ refines $\mathscr{W}_{\beta_n}$, for each $n$ find $W_n\in \mathscr{W}_{\beta_n}$ with $V_n\subseteq W_n$ and put
$$C_n=W_1\cap\ldots \cap W_n.$$
For each $n$ let $Mht(C_n)=\gamma_n+1$ be the M-height of $C_n$. Since the sequence $\{C_n:n=1,2,\ldots\}$ is decreasing we have $\gamma_{n+1}\leq \gamma_n$ and thus there must be $m$ such that $\gamma_{n+1}= \gamma_n$ for all $n\geq m$. From the choice of $m$ we get:
$$W_{m+2}\cap Z(W_1\cap\ldots\cap W_m)\supseteq Z(W_1\cap\ldots\cap W_{m+2}).$$
Since the latter set is nonempty, it follows that
$W_{m+2}\cap Z(W_1\cap\ldots\cap W_m)\neq\emptyset$ so $W_{m+2}\in \mathscr{U}_{(\beta_m+2)}$.
On the other hand
$W_{m+2}\in \mathscr{W}_{\beta_{m+2}}$
so
$$W_{m+2}\notin \mathscr{U}_{\beta_{m+2}}\supseteq \mathscr{U}_{(\beta_{m+1}+1)}\supseteq \mathscr{U}_{(\beta_{m}+2)}$$
which is a contradiction.
\end{proof}

\begin{prop}\label{characterization_zero_dim}
 If $K$ is a zero-dimensional M-scattered Corson compact space, then $K$ has a $T_0$-separating block-point-finite family of clopen subsets of $K$.
\end{prop}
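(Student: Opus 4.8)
The plan is to first use Corson compactness together with zero-dimensionality to produce a point-countable, $T_0$-separating family of \emph{clopen} subsets of $K$, and then to observe that, for M-scattered compacta, every such family is automatically block-point-finite — the latter being a variant of Lemma \ref{lemma_Alster} obtained by re-running its proof with minor bookkeeping changes.

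For the first part, we may assume $K$ is a subspace of $\Sigma([0,1]^\Gamma)$ for some $\Gamma$, and for $\gamma\in\Gamma$ and a rational $q\in(0,1)$ we put $O_{\gamma,q}=\{x\in K:x(\gamma)>q\}$. Each $O_{\gamma,q}$ is open and equals $\bigcup_{m\in\omega}\{x\in K:x(\gamma)\geq q+1/m\}$, hence is an $F_\sigma$ set; the family $\{O_{\gamma,q}\}$ is point-countable, since $x\in O_{\gamma,q}$ forces $\gamma\in\supp x$ and $\supp x$ is countable, and it is $T_0$-separating, since if $x\neq y$ we may pick $\gamma$ with, say, $x(\gamma)>y(\gamma)$ and a rational $q$ with $y(\gamma)<q<x(\gamma)$. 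Since $K$ is compact and zero-dimensional, every compact subset of the open set $O_{\gamma,q}$ is contained in some clopen subset of $K$ that lies inside $O_{\gamma,q}$; writing $O_{\gamma,q}$ as $\bigcup_{m\in\omega}F_m$ with each $F_m$ closed in $K$ and choosing such a clopen set $C_{\gamma,q,m}\subseteq O_{\gamma,q}$ with $F_m\subseteq C_{\gamma,q,m}$, we obtain $O_{\gamma,q}=\bigcup_m C_{\gamma,q,m}$ with all $C_{\gamma,q,m}$ clopen in $K$. The family $\mathscr{C}=\{C_{\gamma,q,m}\}$ is then a point-countable, $T_0$-separating family of clopen subsets of $K$: point-countability and $T_0$-separation pass from $\{O_{\gamma,q}\}$ to $\mathscr{C}$ because each $C_{\gamma,q,m}\subseteq O_{\gamma,q}$ and $\bigcup_m C_{\gamma,q,m}=O_{\gamma,q}$.

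Since M-scatteredness is inherited by arbitrary subspaces, every member of $\mathscr{C}$ is an M-scattered clopen subset of the compact space $K$. It therefore remains to prove the following variant of Lemma \ref{lemma_Alster}: every point-countable family $\mathscr{U}$ of M-scattered clopen subsets of a compact space $K$ is block-point-finite. (Granting this, a witnessing block decomposition of $\mathscr{C}$ is exactly the family required by the proposition, since merely regrouping the members of a $T_0$-separating family leaves it $T_0$-separating.) I would prove this by transfinite induction on $\kappa=|\mathscr{U}|$. If $\kappa\leq\omega$, then $\mathscr{U}$ itself is a single countable block. If $\kappa>\omega$, I would run the construction in the proof of Lemma \ref{lemma_Alster} unchanged — the operators $Z(\cdot)$ and $Z(\mathscr{W})$ and the increasing chain $\{\mathscr{U}_\beta:\beta<\kappa\}$ — obtaining subfamilies $\mathscr{W}_\beta=\mathscr{U}_{\beta+1}\setminus\mathscr{U}_\beta$ with $\mathscr{U}=\bigcup_{\beta<\kappa}\mathscr{W}_\beta$, with $|\mathscr{W}_\beta|<\kappa$ for each $\beta$ (by the Claim proved inside that lemma), and with $\{\bigcup\mathscr{W}_\beta:\beta<\kappa\}$ point-finite in $K$; this last point is exactly the M-height stabilization argument closing the proof of Lemma \ref{lemma_Alster}, applied directly to sets $W_n\in\mathscr{W}_{\beta_n}$ containing a fixed point $x$ (no passage to a refinement is needed there). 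Each $\mathscr{W}_\beta$ is again a point-countable family of M-scattered clopen subsets of $K$ of size $<\kappa$, so by the inductive hypothesis $\mathscr{W}_\beta=\bigcup_{i\in I_\beta}\mathscr{W}_{\beta,i}$ with each block $\mathscr{W}_{\beta,i}$ countable and $\{\bigcup\mathscr{W}_{\beta,i}:i\in I_\beta\}$ point-finite in $K$. Then $\mathscr{U}=\bigcup\{\mathscr{W}_{\beta,i}:\beta<\kappa,\ i\in I_\beta\}$ is a decomposition into countable blocks, and $\{\bigcup\mathscr{W}_{\beta,i}:\beta<\kappa,\ i\in I_\beta\}$ is point-finite, since a point of $K$ lies in $\bigcup\mathscr{W}_\beta$ for only finitely many $\beta$ and, for each such $\beta$, in $\bigcup\mathscr{W}_{\beta,i}$ for only finitely many $i$.

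The only genuinely substantial step is the induction in the last paragraph, and within it the only substantial point is the point-finiteness of $\{\bigcup\mathscr{W}_\beta:\beta<\kappa\}$, which is literally the computation ending the proof of Lemma \ref{lemma_Alster} (using that the $W_n$ are pairwise distinct because the families $\mathscr{W}_{\beta_n}$ are pairwise disjoint, and that M-derivatives are monotone under passing to clopen subsets); everything else there is the bookkeeping that converts its \emph{refinement} conclusion into a \emph{block} one. The first two steps — the clopen decomposition of the $O_{\gamma,q}$ and the heredity of M-scatteredness — are routine.
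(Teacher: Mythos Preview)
Your argument is correct, and the route is a genuine reorganization of the paper's. Both proofs begin by producing a point-countable $T_0$-separating family of clopen sets (the paper cites \cite[U.118]{Tk} and invokes zero-dimensionality; you do the same thing explicitly via the sets $O_{\gamma,q}$). From there the two proofs diverge in how they package the induction. The paper runs an induction on the M-height $Mht(K)$: at each step it splits the clopen family into a countable part hitting $Z(K)$ and a remainder, applies Lemma~\ref{lemma_Alster} to refine the remainder to a point-finite family $\mathscr{V}$, and then recurses into each $V\in\mathscr{V}$; this forces a case analysis to recover $T_0$-separation and a separate verification of the block structure for the assembled family $\mathscr{R}$. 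You instead prove a self-contained strengthening of Lemma~\ref{lemma_Alster}---\emph{every} point-countable family of M-scattered clopen sets is already block-point-finite---by the same cardinality induction, observing (correctly) that the M-height stabilization computation at the end of that lemma literally shows that $\{\bigcup\mathscr{W}_\beta:\beta<\kappa\}$ is point-finite, so one can recurse into the $\mathscr{W}_\beta$'s themselves rather than first passing to a refinement. The payoff of your version is that the original $T_0$-separating family $\mathscr{C}$ is never altered, only regrouped, so $T_0$-separation comes for free and the M-height induction of the paper's proof is absorbed into the single cardinality induction; the cost is that you must restate and (mildly) re-prove Lemma~\ref{lemma_Alster} rather than quoting it. The underlying combinatorics---Alster's $Z(\cdot)$ operator and the stabilization-of-M-heights contradiction---is identical in both.
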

\begin{proof}
 We will prove the proposition inductively with respect to the M-height $Mht(K)$ of $K$. If $Mht(K)=1$, then $K$ is metrizable and we are done. Fix an ordinal $\gamma>1$ and suppose that
 our assertion is true for all $\alpha<\gamma+1$ and that $Mht(K)=\gamma+1$.

 Since $K$ is Corson compact, there is a $T_0$-separating point-countable family $\mathscr{U}$ of open $F_\sigma$-subsets of $K$ (see \cite[U.118]{Tk}). Since $K$ is zero-dimensional we may without loss of generality assume that $\mathscr{U}$ consists of clopen subsets of $K$.

 As in Lemma \ref{lemma_Alster} we put
 $$Z(K)=K^{(\gamma)}.$$
 The set $Z(K)$ is a nonempty compact metrizable subset of $K$.

 Divide $\mathscr{U}$ into two subfamilies $\mathscr{U}=\mathscr{U}_0\cup \mathscr{U}_1$, where
 $$  \mathscr{U}_0=\{U\in \mathscr{U}:U\cap Z(K)=\emptyset\} \quad \mbox{and} \quad
  \mathscr{U}_1=\{U\in \mathscr{U}:U\cap Z(K)\neq\emptyset\}.
 $$
The set $Z(K)$ is compact metrizable so it is separable. Fix a countable dense subset $D$ of $Z(K)$. The family
$$\mathscr{U}_1=\{U\in \mathscr{U}:U\cap D\neq\emptyset\}=\bigcup_{x\in D}\{U\in \mathscr{U}:x\in U\}$$
is countable because $D$ is countable and $\mathscr{U}$ is point-countable.

Applying Lemma \ref{lemma_Alster} to the family $\mathscr{U}_0$ we can find a point-finite family $\mathscr{V}$ of clopen subsets of $K$ so that $\mathscr{V}$ refines $\mathscr{U}_0$ and $\bigcup \mathscr{V}=\bigcup\mathscr{U}_0$. Every $V\in \mathscr{V}$ misses $Z(K)$ so $Mht(V)<Mht(K)=\gamma+1$, for every $V\in \mathscr{V}$.

By the inductive assumption, for every $V\in \mathscr{V}$, there is a family $\mathscr{H}(V)$ of clopen subsets of $V$ which is $T_0$-separating and block-point-finite in $V$. It remains to show the following:
\begin{claim}
 The family $$\mathscr{R}=\mathscr{V}\cup\bigcup\{\mathscr{H}(V):V\in \mathscr{V}\}\cup\mathscr{U}_1$$ is $T_0$-separating and block-point-finite in $K$.
\end{claim}
\begin{proof}
First let us show that the above family $T_0$-separates the points of $K$. To this end pick $x\neq y\in K$ and consider the following three cases:

\medskip

\textit{Case 1:} $x,y\in \bigcup\mathscr{U}_0$. Since $\bigcup \mathscr{V}=\bigcup\mathscr{U}_0$, there is $V\in \mathscr{V}$ such that $x\in V$. If $y\notin V$, then we are done. If $y\in V$, then we can use the family $\mathscr{H}(V)$ to $T_0$-separate the points $x,y$.

\medskip

\textit{Case 2:} $\{x,y\}\cap\bigcup\mathscr{U}_0$ is a one-element set. By symmetry, we can assume that $x\in \bigcup\mathscr{U}_0$ while $y\notin\bigcup\mathscr{U}_0$. We have $\bigcup\mathscr{V}=\bigcup\mathscr{U}_0$ so we can pick $V\in \mathscr{V}$ so that $x\in V$. Since $y\notin\bigcup\mathscr{U}_0$ we have $y\notin V$ and we are done.

\medskip

\textit{Case 3:} $x,y\notin\bigcup\mathscr{U}_0$. The family $\mathscr{U}$ is $T_0$-separating so find $U\in \mathscr{U}$ with $|\{x,y\}\cap U|=1$. Since neither $x$ nor $y$ belongs to $\mathscr{U}_0$ we infer that $U\in \mathscr{U}_1$ and we are done.

So the family $\mathscr{R}$ is $T_0$-separating. Let us show that $\mathscr{R}$ is block-point-finite. For every $V\in \mathscr{V}$, the family $\mathscr{H}(V)$ is block-point-finite in $V$. Thus, for $V\in \mathscr{V}$, we can write:
$$\mathscr{H}(V)=\{\mathscr{H}_t:t\in T(V)\},\quad
\mbox{for some set $T(V)$},$$
where each family $\mathscr{H}_t$ is countable and the family $\{\bigcup\mathscr{H}_t:t\in T(V)\}$ is point-finite in $V$. We may also assume that the sets $T(V)$ are pairwise disjoint for $V\in \mathscr{V}$.

Enumerate $\mathscr{V}=\{V_t:t\in T_0\}$, where $T_0\cap \bigcup\{T(V):V\in \mathscr{V}\}=\emptyset$
and let $a\notin T_0\cup \bigcup\{T(V):V\in \mathscr{V}\}$.
We put
\begin{equation*}
 \mathscr{R}_t=\left\{\begin{aligned}
                & \{V_t\}&& \mbox{ if } t\in T_0\\
                & \mathscr{H}_t && \mbox{ if } t\in \bigcup\{T(V):V\in \mathscr{V}\}\\
                &\mathscr{U}_1  && \mbox{ if } t=a
               \end{aligned}
               \right.
\end{equation*}
and let
$$T=T_0\cup\{a\}\cup\bigcup\{T(V):V\in \mathscr{V}\}.$$
Clearly, for every $t\in T$ the family $\mathscr{R}_t$ is countable. It remains to verify that given $x\in K$, the set
$$T_x=\{t\in T:x\in \bigcup \mathscr{R}_t\}$$
is finite. Fix $x\in K$. If $x\notin \bigcup \mathscr{U}_0$, then $x\notin \bigcup\mathscr{R}_t$ for $t\neq a$. Hence, the set $T_x\subseteq \{a\}$ is at most one-element. If $x\in\bigcup \mathscr{U}_0=\bigcup \mathscr{V}$, then using the fact that $\mathscr{V}$ is point-finite and that the family $\{\bigcup \mathscr{H}_t:t\in T(V)\}$ is point-finite in $V$, for $V\in \mathscr{V}$, we easily check that the set $T_x$ is finite in this case too.
\end{proof}
The proposition is proved.
\end{proof}

The next result is analogous to Theorem 7 in \cite{Y}. Unfortunately no proof of \cite[Theorem 7]{Y} is given in \cite{Y}. It is suggested instead that this theorem can be proved using the same method as \cite[Theorem 8]{Y} (see \cite[p. 275]{Y}). However, van Douwen noted in his review of \cite{Y} (see \cite{vD}) that this method, when applied in the context of \cite[Theorem 7]{Y}, contains a gap. The gap was later filled by Gruenhage and Michael in \cite{GM}. Anyway,
let us give a fairly detailed proof of a more general statement.

\begin{prop}\label{Proposition-Yakovlev}
Let $K$ be a compact space. If $K$ is hereditarily metalindel\"of and M-scattered,then $K$ is Corson compact.
\end{prop}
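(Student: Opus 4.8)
The plan is to furnish $K$ with a point-countable $T_0$-separating family of open $F_\sigma$ subsets; by the standard characterization of Corson compacta (see, e.g., \cite{Tk}) this is precisely what has to be done. I would argue by transfinite induction on the M-height $Mht(K)=\gamma+1$, following Yakovlev's idea of peeling off the metrizable core $K^{(\gamma)}$, and carrying out the point-counting with the care needed to repair the gap in \cite[Theorem 7]{Y} that was noted by van Douwen \cite{vD} and filled by Gruenhage and Michael \cite{GM}.

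If $\gamma=0$ then $K$ is second countable, hence metrizable, and a countable base made of cozero sets works. Assume $\gamma>0$ and set $Z=K^{(\gamma)}$ (a compact metrizable, in particular separable, closed subspace of $K$) and $W=K\setminus Z$. First fix a countable base of $Z$ consisting of cozero subsets of $Z$ and, using normality of $K$, extend each basic set to a cozero subset of $K$ with the same trace on $Z$; this gives a countable family $\mathscr{D}$ of open $F_\sigma$ subsets of $K$ that $T_0$-separates any two points of $Z$. Next deal with $W$: since $W$ is open, regularity lets us cover $W$ by open sets whose $K$-closures are contained in $W$, and an easy induction shows $(\overline{V})^{(\alpha)}\subseteq \overline{V}\cap K^{(\alpha)}$ for every such closure, so $(\overline{V})^{(\gamma)}\subseteq \overline{V}\cap Z=\emptyset$; thus each $\overline{V}$ is a compact M-scattered space of M-height below $\gamma+1$. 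Since $K$ is hereditarily metalindel\"of, this cover of $W$ has a point-countable (in $K$) open refinement $\mathscr{V}$, and for each $V\in\mathscr{V}$ the inductive hypothesis applied to the compactum $\overline{V}$ supplies a point-countable $T_0$-separating family of open $F_\sigma$ subsets of $\overline{V}$, which we may moreover assume covers $\overline{V}$. Each such set extends to an open $F_\sigma$ subset of $K$ with the correct trace on $\overline{V}$ and confined to a prescribed open neighbourhood of $\overline{V}$ (write it as an increasing union of compacta and enclose each in a cozero set of $K$ with closure inside that neighbourhood). Adjoining all these extensions to $\mathscr{D}$, together with a point-countable family of open $F_\sigma$ subsets of $W$ covering $W$, yields a family of open $F_\sigma$ subsets of $K$; a straightforward case analysis then shows that it $T_0$-separates $K$. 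It remains to check point-countability.

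Point-countability is immediate at points of $Z$, and on each $\overline{V}$ the families coming from the inductive hypothesis are point-countable. The real obstacle -- exactly the point where \cite[Theorem 7]{Y} was incomplete -- is that at a point $x\in W$ two things may go wrong: the recursion runs through all the nested closures $\overline{V}$ containing $x$, of which there may be uncountably many (if the M-rank of $x$ is large), and the extensions to $K$ of the sets supported on a given $\overline{V}$ may pile up at points lying near $\overline{V}$ but outside it. The remedy, following Gruenhage and Michael, is to choose the refinements $\mathscr{V}$ at successive stages and the extensions in a mutually compatible way -- so that the relevant closures at each stage again form a point-countable family, and an extension of a set supported on $\overline{V}$ meets only those neighbouring pieces that $\overline{V}$ already meets -- with the effect that each $x\in W$ ends up belonging to sets coming from only countably many stages and, at each stage, from only countably many of the extended families. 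Making this compatibility precise -- which amounts to a shrinking lemma for point-countable open covers -- is where essentially all the work of the proof is concentrated; the base case, the M-height reduction on the pieces $\overline{V}$, and the $T_0$-separation check are routine.
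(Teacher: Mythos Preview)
Your strategy---induction on $Mht(K)$, peeling off the metrizable core $Z=K^{(\gamma)}$, and invoking Gruenhage--Michael to repair the Yakovlev gap---is exactly what the paper does, but you make the execution harder than necessary in two respects.

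First, the paper applies \cite[Corollary~4.1]{GM} \emph{once}, directly to the locally compact metalindel\"of space $K\setminus Z$, using as a base the open $F_\sigma$ sets with compact closure contained in $K\setminus Z$. This yields a cover $\mathscr{W}$ whose \emph{closures} $\{\overline{W}:W\in\mathscr{W}\}$ are already point-countable in $K$. That is the entire use of \cite{GM}; there are no ``mutually compatible choices at successive stages'' to orchestrate.

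Second, once such a $\mathscr{W}$ is fixed and the inductive hypothesis hands you a point-countable $T_0$-separating family $\mathscr{V}'(W)$ of open $F_\sigma$ subsets of each $\overline{W}$, you should \emph{intersect} with $W$ rather than extend. Each $V\cap W$ (for $V\in\mathscr{V}'(W)$) is open $F_\sigma$ in $K$, since $W$ is open $F_\sigma$ in $K$ and $V$ is $F_\sigma$ in the closed set $\overline{W}$; and because $V\cap W\subseteq\overline{W}$, point-countability of the closures together with point-countability of each $\mathscr{V}'(W)$ inside $\overline{W}$ immediately gives point-countability of the assembled family $\mathscr{U}\cup\mathscr{W}\cup\bigcup_{W}\mathscr{V}(W)$ in $K$. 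There is no ``pile-up near $\overline{V}$'' to worry about, and your concern about a point lying in ``uncountably many nested closures'' across the recursion is spurious: the inductive hypothesis is a black box delivering a single point-countable family on each $\overline{W}$, and you never need to look inside it.
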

\begin{proof}
 By \cite[U.118]{Tk}, it is sufficient to show that $K$ has a point-countable $T_0$-separating family of open $F_\sigma$-subsets. To this end, we proceed by induction on $Mht(K)$. If $Mht(K)=1$, then $K$ is metrizable so we are done. Suppose that $Mht(K)=\beta+1$ and that our proposition is already proved for compact spaces of M-height $<\beta+1$.

 The space $X=K\setminus K^{(\beta)}$ is locally compact and metalindel\"of. Note that the family
 $\mathscr{B}$ consisting of all open $F_\sigma$-subsets of $X$ whose closures in $X$ are compact is a base for $X$. According to
 \cite[Corollary 4.1]{GM} we can find a cover $\mathscr{W}\subseteq \mathscr{B}$ of $X$ so that the collection $\{\overline{W}:W\in \mathscr{W}\}$ is point-countable. Here the closures are taken in $X$ but since they are compact, each set $\overline{W}\subseteq X$ is closed in $K$. For each $W\in \mathscr{W}$, we have $\overline{W}\cap K^{(\beta)}=\emptyset$ so $Mht(\overline{W})<\beta+1$. Hence, by the inductive assumption, for every $W\in \mathscr{W}$, we can find a family $\mathscr{V}'(W)$ of open $F_\sigma$-subsets of $\overline{W}$ which $T_0$-separates the points of $\overline{W}$ and
 \begin{equation}
 \mbox{for every } x\in \overline{W} \mbox{ the set } \{V\in \mathscr{V}'(W):x
 \in V\} \mbox{ is countable.}
 \end{equation}
 Let
 $$\mathscr{V}(W)=\{V\cap W:V\in \mathscr{V}'(W)\}.$$
 The family $\mathscr{V}(W)$ consists of open $F_\sigma$-subsets of $K$ because every $W$ is open $F_\sigma$ in $K$ and $V\cap W$ is open $F_\sigma$ in $W$. It is also clear that $\mathscr{V}(W)$ is $T_0$-separating in $W$.

 The set $K^{(\beta)}$ is compact metrizable so we can find a countable family $\mathscr{U}$ of open $F_\sigma$-subsets of $K$ that $T_0$-separates the points of $K^{(\beta)}$ (take a countable base of cozero subsets of $K^{(\beta)}$ and extend them to cozero subsets of $K$). Consider the family
 $$\mathscr{U}\cup \mathscr{W}\cup \bigcup\{\mathscr{V}(W):W\in \mathscr{W}\}.$$
 It is easy to check that this is a point-countable $T_0$-separating family of open $F_\sigma$-subsets of $K$.
\end{proof}

Now we are ready to prove Theorem \ref{characterization_NY_2}
\begin{proof}[Proof of Theorem \ref{characterization_NY_2}]
Every $NY$ compact space is Eberlein compact and by Theorem \ref{characterization_NY_1} every $NY$ compact space is M-scattered, so $(i)\Rightarrow (iii)$. The implication $(iii)\Rightarrow (ii)$ is obvious. According to \cite[Theorem 1]{Y} we have $(ii)\Rightarrow (iv)$ and $(iv)\Rightarrow (ii)$ by Proposition \ref{Proposition-Yakovlev}.

It remains to show $(ii)\Rightarrow (i)$. To this end suppose that $K$ is Corson compact and M-scattered. Since $K$ is Corson compact we may assume that $K\subseteq \Sigma([0,1]^\Gamma)$, for some set $\Gamma$. Let $f:\{0,1\}^\omega\to [0,1]$ be the continuous surjection given by the formula
$$f((x_n)_{n\in \omega})=\sum_{n=0}^\infty \frac{x_n}{2^{n+1}}$$
and let $f_\gamma=f$ for all $\gamma\in \Gamma$.
Consider the product map
$$\varphi=\prod_{\gamma\in \Gamma}f_\gamma:\left( \{0,1\}^\omega\right)^\Gamma\to [0,1]^\Gamma.$$
We claim that $\varphi^{-1}(K)\subseteq \Sigma(\{0,1\}^\omega)^\Gamma$ and thus $\varphi^{-1}(K)$ is Corson compact. True, if $x=(x_\gamma)_{\gamma\in \Gamma}\in (\{0,1\}^\omega)^\Gamma$ and if $\{0,1\}^\omega \ni x_\gamma\neq (0,0,\ldots)$ for some $\gamma\in \Gamma$, then $$\pi_\gamma(\varphi(x))=f_\gamma(x_\gamma)=f(x_\gamma)\neq 0,$$ where $\pi_\gamma:[0,1]^\Gamma\to [0,1]$ is the projection onto the coordinate $\gamma$. So
\begin{equation}\label{supports_inclusion}
 \supp x\subseteq \supp\varphi(x),\quad \mbox{for all } x\in (\{0,1\}^\omega)^\Gamma.
\end{equation}
If $x\in \varphi^{-1}(K)$, then $\supp \varphi(x)$ is countable and thus $\supp x$ is countable as well by \eqref{supports_inclusion}.

Let us show that $\varphi^{-1}(K)$ is M-scattered. To this end fix an arbitrary nonempty subset $A$ of $\varphi^{-1}(K)$. Since $K$ is M-scattered, we can find an open subset $U$ of $K$ so that the set $$U\cap\varphi(A)\subseteq K\subseteq \Sigma([0,1]^\Gamma)$$ is nonempty and second-countable. It follows that the set
$$\Delta=\bigcup\{\supp z:z\in U\cap\varphi(A)\}$$
is countable. Pick $x\in \varphi^{-1}(U)\cap A$. From \eqref{supports_inclusion} we infer that $\supp x \subseteq \Delta$ and hence $\varphi^{-1}(U)\cap A$ may be treated as a subspace of $(\{0,1\}^\omega)^\Delta$. Since $\Delta$ is countable, we conclude that $\varphi^{-1}(U)\cap A$ is second-countable and thus $\varphi^{-1}(K)$ is M-scattered.

Applying Proposition \ref{characterization_zero_dim} and Theorem \ref{characterization_NY_1} to the space $\varphi^{-1}(K)$ we infer that $\varphi^{-1}(K)$ is $NY$ compact. Now, $K$ is $NY$ compact being a continuous image of $\varphi^{-1}(K)$ (see \cite[Corollary 5.3]{MPZ}). This finishes the proof of $(ii)\Rightarrow (i)$.
\end{proof}

Using Theorem \ref{characterization_NY_2} and some results from \cite{MPZ}, we can easily deduce the following theorem that provides a new characterization of the class of $\omega$-Corson compacta.

\begin{thrm}\label{characterization_omega_Corson}
 The following conditions are equivalent for any compact space $K$:
 \begin{enumerate}[(i)]
  \item $K$ is $\omega$-Corson.
  \item $K$ is Corson compact and every nonempty subspace $A\subseteq K$ contains a nonempty relatively open finite-dimensional subspace of countable weight.
  \item $K$ is Eberlein compact and every nonempty subspace $A\subseteq K$ contains a nonempty relatively open finite-dimensional subspace of countable weight.
  \item $K$ is hereditarily metalindel\"of and every nonempty subspace $A\subseteq K$ contains a nonempty relatively open finite-dimensional subspace of countable weight.
 \end{enumerate}
\end{thrm}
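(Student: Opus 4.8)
The plan is to establish the cycle $(i)\Rightarrow(iii)\Rightarrow(ii)\Rightarrow(i)$ together with $(ii)\Leftrightarrow(iv)$, using throughout that each of $(ii)$, $(iii)$, $(iv)$ is obtained from the corresponding clause of Theorem~\ref{characterization_NY_2} by strengthening ``M-scattered'' to the property, call it $(\star)$, that every nonempty subspace of $K$ contains a nonempty relatively open subspace which is second countable \emph{and} finite-dimensional. Since $(\star)$ obviously implies M-scatteredness, the ``soft'' implications follow at once from the earlier results: $(iii)\Rightarrow(ii)$ is trivial; if $(ii)$ holds then $K$ is Corson compact and M-scattered, hence $NY$ compact by Theorem~\ref{characterization_NY_2}, hence hereditarily metacompact by Theorem~\ref{characterization_NY_1}, so in particular hereditarily metalindel\"of, which together with $(\star)$ gives $(iv)$; and if $(iv)$ holds then $K$ is hereditarily metalindel\"of and M-scattered, hence Corson compact by Proposition~\ref{Proposition-Yakovlev}, which with $(\star)$ gives $(ii)$.

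For $(i)\Rightarrow(iii)$ I recall first that every $\omega$-Corson compactum is $NY$ compact, hence Eberlein compact, so only $(\star)$ is at stake. Fix $K\subseteq\sigma([0,1]^\Gamma)$ and a nonempty $A\subseteq K$. As $K$ is $NY$ compact it is M-scattered by Theorem~\ref{characterization_NY_1}, so $A$ has a nonempty relatively open second countable subspace $U$, and then $\Delta:=\bigcup\{\supp z:z\in U\}$ is countable by a standard argument about second countable subsets of $\Sigma$-products. Put $K_\Delta:=\{x\in K:\supp x\subseteq\Delta\}$; this is a closed, hence compact, subspace of $K$, it is metrizable because it embeds into $[0,1]^\Delta$, and $U\subseteq K_\Delta$. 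Now $K_\Delta=\bigcup_{n}\big(K_\Delta\cap\sigma_n([0,1]^\Delta)\big)$ is a countable closed cover, and each $\sigma_n([0,1]^\Delta)$ is a compact metrizable space of covering dimension at most $n$ (it is a countable closed union of cubes of dimension $\le n$, so the countable sum theorem applies). The closure $G$ of $U$ in $K_\Delta$ is compact metrizable, hence a Baire space, so $G\cap\sigma_n([0,1]^\Delta)$ has nonempty interior in $G$ for some $n$; intersecting $U$ with a witnessing open set of $K$ yields a nonempty relatively open $U'\subseteq A$ with $U'\subseteq K_\Delta\cap\sigma_n([0,1]^\Delta)$. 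Such a $U'$ is second countable (contained in $U$) and finite-dimensional (contained in a finite-dimensional separable metric space), which is exactly $(\star)$.

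The implication $(ii)\Rightarrow(i)$ is the substantial one. By Theorem~\ref{characterization_NY_2}, $K$ is $NY$ compact, and it remains to spend the dimension hypothesis $(\star)$. I would isolate the metric lemma: \emph{a compact metrizable space satisfying $(\star)$ is strongly countable-dimensional}. Its proof runs by letting $O$ be the union of all strongly countable-dimensional open subspaces of the space; $O$ is itself strongly countable-dimensional (a $\sigma$-compact union of such, and open subspaces of strongly countable-dimensional metrizable spaces are again strongly countable-dimensional), and if the complement of $O$ were nonempty, $(\star)$ would provide a nonempty relatively open finite-dimensional set of the form $W\cap(K\setminus O)$ with $W$ open, whence $W=\big(W\cap(K\setminus O)\big)\sqcup(W\cap O)$ is the union of a closed finite-dimensional set and an $F_\sigma$ strongly countable-dimensional set, hence strongly countable-dimensional, forcing $W\subseteq O$ --- a contradiction. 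Combined with the theorem of Marciszewski, Plebanek and Zakrzewski that a compact \emph{metrizable} space is $\omega$-Corson if and only if it is strongly countable-dimensional, this lemma shows that under $(\star)$ every compact metrizable subspace of $K$ is $\omega$-Corson; in particular each M-derivative that coincides with the metrizable core $Z(K)$ is $\omega$-Corson.

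From here I would mimic the inductive proofs of Proposition~\ref{characterization_zero_dim}, Proposition~\ref{Proposition-Yakovlev}, and $(ii)\Rightarrow(i)$ of Theorem~\ref{characterization_NY_2}, by induction on $Mht(K)$. The base case $Mht(K)=1$ is the metric lemma together with the Marciszewski--Plebanek--Zakrzewski characterization. For the inductive step, $K$ being $NY$ compact it is hereditarily metacompact, so $K\setminus K^{(\beta)}$ admits a point-finite open cover by sets whose closures are compact, miss $K^{(\beta)}$, and hence have smaller M-height; applying the inductive hypothesis to those closures, and separately to the $\omega$-Corson metrizable core $K^{(\beta)}$, one assembles an embedding of $K$ into a $\sigma$-product of intervals rather than into a $\sigma$-product of Hilbert cubes. (If one is willing to quote the corresponding internal description of $\omega$-Corson compacta among $NY$ compacta from \cite{MPZ}, this last step becomes immediate.) I expect the genuine obstacle to lie exactly in this assembly: one must combine the point-finite families coming from the cover of $K\setminus K^{(\beta)}$ and from the core $K^{(\beta)}$ into a single family that is \emph{point-finite} --- Proposition~\ref{characterization_zero_dim} only produces a block-point-finite family, which suffices for $NY$ compactness but is too weak for $\omega$-Corson --- and it is precisely here that the finite-dimensional strengthening in $(\star)$ must be used, via the fact that a strongly countable-dimensional metrizable compactum embeds into a $\sigma$-product of intervals, i.e. is separated by the superlevel sets of a family of $[0,1]$-valued functions only finitely many of which are nonzero at any point.
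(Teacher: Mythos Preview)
Your handling of the ``soft'' implications and of $(i)\Rightarrow(iii)$ is correct and, in fact, more detailed than what the paper does. The paper's entire proof of this theorem is the single line ``Apply Theorem~\ref{characterization_NY_1}, Theorem~\ref{characterization_NY_2} and \cite[Theorem~4.6]{MPZ}.'' In other words, the paper takes precisely the shortcut you mention parenthetically: the substantial direction $(ii)\Rightarrow(i)$ is obtained by first using Theorem~\ref{characterization_NY_2} to conclude that $K$ is $NY$ compact, and then invoking the internal characterization of $\omega$-Corson compacta among $NY$ compacta from \cite{MPZ}.

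Where your proposal diverges is in trying to \emph{avoid} that citation and instead run a direct induction on $Mht(K)$, assembling an embedding into $\sigma([0,1]^\Gamma)$ from the pieces coming from a point-finite cover of $K\setminus K^{(\beta)}$ and from the metrizable core $K^{(\beta)}$. You yourself identify the obstacle: the block-point-finite machinery of Proposition~\ref{characterization_zero_dim} is too coarse for $\omega$-Corson, and you do not actually carry out the assembly --- you only indicate that $(\star)$ ``must be used'' at this point. As written, this part of the proposal is a plan, not a proof; the gap is real and nontrivial (it is essentially the content of \cite[Theorem~4.6]{MPZ}). If you are willing to quote \cite{MPZ}, your argument is complete and coincides with the paper's; if not, the inductive step for $(ii)\Rightarrow(i)$ remains unfinished.
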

\begin{proof}
 Apply Theorem \ref{characterization_NY_1}, Theorem \ref{characterization_NY_2} and \cite[Theorem 4.6]{MPZ}.
\end{proof}

\section{Duplicates}
Given a topological space $X$, the \textit{Alexandroff duplicate of $X$} is the space $AD(X)$ whose underlying set is $X\times\{0,1\}$, endowed with the following topology:
Points in $X\times\{1\}$ are isolated and a basic open neighborhood of $(x,0)$ is of the form
$$(U\times\{0,1\})\setminus \{(x,1)\},$$
where $U$ is an open neighborhood of $x$ in $X$.

It is known (see \cite[U.358]{Tk1} and \cite[Proposition 3.8]{MPZ}) that $AD(K)$ is Eberlein compact ($NY$ compact, $\omega$-Corson) if and only if $K$ is Eberlein compact (resp., $NY$ compact, $\omega$-Corson).
The purpose of this section is to give a complete characterization of compact spaces $K$ whose Alexandroff duplicate $AD(K)$ is semi-Eberlein ($NY$-Valdivia, $\omega$-Valdivia). It turns out that for duplicates being semi-Eberlein ($NY$-Valdivia, $\omega$-Valdivia) is the same as being Eberlein (resp., $NY$ compact, $\omega$-Corson). This enables us to give examples mentioned in the Introduction.

Let us fix some notation.
Given a set $\Gamma$ let us denote
$$c_0(\Gamma)=\{x\in [0,1]^\Gamma:\forall\varepsilon>0\;\;|\{\gamma\in \Gamma:x(\gamma)>\varepsilon\}|<\omega\}.$$

Let $d$ be a metric on a space $X$ (not necessarily related to the topology of $X$). We say that $d$ \textit{fragments} $X$ if for every nonempty closed subset $A$ of $X$ and every $\varepsilon>0$, there is an open subset $U$ of $X$ such that $U\cap  A\neq\emptyset$ and $\sup\{d(x,y):x,y\in U\cap A\}\leq\varepsilon$.

By $d_\Gamma$ we denote the \textit{uniform metric} on the product $[0,1]^\Gamma$, i.e.,
$$d_\Gamma(x,y)=\sup\{|x(\gamma)-y(\gamma)|:\gamma\in \Gamma\},\quad\mbox{for }x,y\in [0,1]^\Gamma$$
In the sequel we shall appeal to the following well known result (see \cite{OSV}, \cite{St}).
\begin{thrm}\label{theorem_OSV}
 A compact space $K$ is Eberlein compact if and only if $K$ is Corson compact and, for some set $\Gamma$, the space $K$ embeds into the product $[0,1]^\Gamma$ in such a way that the uniform metric $d_\Gamma$ fragments the copy of $K$ in $[0,1]^\Gamma$.
\end{thrm}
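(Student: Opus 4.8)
The plan is to establish the two implications of the equivalence separately. The implication ``Eberlein $\Rightarrow$ Corson and $d_\Gamma$-fragmentable'' admits a short self-contained proof, which I sketch below. The converse is, in essence, the theorem of Orihuela, Schachermayer and Valdivia that a Corson compact Radon--Nikod\'ym space is Eberlein compact (see \cite{OSV}, and also \cite{St}); I would deduce it from that result, and I also indicate how a self-contained argument in the spirit of \cite{OSV} would begin.

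\emph{Forward implication.} Assume $K$ is Eberlein compact and fix an embedding of $K$ onto a compact subset of $c_0(\Gamma)$, for some set $\Gamma$. Every element of $c_0(\Gamma)$ has countable support, so $c_0(\Gamma)\subseteq\Sigma([0,1]^\Gamma)$ and $K$ is Corson compact. It remains to verify that $d_\Gamma$ fragments the copy of $K$; in fact I would show that $d_\Gamma$ fragments every compact $A\subseteq c_0(\Gamma)$. Fix such an $A\neq\emptyset$ and $\varepsilon>0$; it suffices to find an open $U\subseteq[0,1]^\Gamma$ with $U\cap A\neq\emptyset$ and $\sup\{d_\Gamma(x,y):x,y\in U\cap A\}\le\varepsilon$ (then $U\cap K$ witnesses fragmentation at $A$). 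For $w\in A$ set $N(w)=|\{\gamma\in\Gamma:w(\gamma)>\varepsilon\}|$; this is finite because $w\in c_0(\Gamma)$, and $w\mapsto N(w)$ is lower semicontinuous on $A$. If $M:=\sup_{w\in A}N(w)<\infty$, pick $x_0\in A$ with $N(x_0)=M$, put $F=\{\gamma:x_0(\gamma)>\varepsilon\}$, and note that $\mu:=\min_{\gamma\in F}(x_0(\gamma)-\varepsilon)>0$ since $F$ is finite. Let
$$U=\{y\in[0,1]^\Gamma:|y(\gamma)-x_0(\gamma)|<\tfrac12\min(\mu,\varepsilon)\ \text{ for all }\gamma\in F\}.$$
Then $x_0\in U$. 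For $w\in U\cap A$ and $\gamma\in F$ we have $w(\gamma)>x_0(\gamma)-\tfrac12\mu>\varepsilon$, so $F\subseteq\{\gamma:w(\gamma)>\varepsilon\}$; since $|\{\gamma:w(\gamma)>\varepsilon\}|=N(w)\le M=|F|$, this forces $\{\gamma:w(\gamma)>\varepsilon\}=F$, hence $w(\gamma)\le\varepsilon$ for all $\gamma\notin F$. Consequently, for $w,w'\in U\cap A$ we get $|w(\gamma)-w'(\gamma)|<\varepsilon$ when $\gamma\in F$ and $|w(\gamma)-w'(\gamma)|\le\varepsilon$ when $\gamma\notin F$, so $d_\Gamma(w,w')\le\varepsilon$. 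If instead $M=\infty$, the relatively open sets $G_m=\{w\in A:N(w)>m\}$ decrease to $\emptyset$; by the Baire category theorem on the compact space $A$ some closed set $A\setminus G_m$ has nonempty interior in $A$, so there is an open $W\subseteq[0,1]^\Gamma$ with $W\cap A\neq\emptyset$ and $N\le m$ on $W\cap A$, hence, by lower semicontinuity, $N\le m$ on $\overline{W\cap A}$. Applying the previous case to the closed set $\overline{W\cap A}$ produces an open $U$ meeting $\overline{W\cap A}$ with $\sup\{d_\Gamma(x,y):x,y\in U\cap\overline{W\cap A}\}\le\varepsilon$; then $U$ meets $W\cap A$ and $(U\cap W)\cap A\subseteq U\cap\overline{W\cap A}$, so $U\cap W$ works.

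\emph{Converse implication.} Suppose $K$ embeds in $[0,1]^\Gamma$ as a Corson compact space on which $d_\Gamma$ is a fragmenting metric. Since the $d_\Gamma$-topology on $K$ is finer than its original topology, this says precisely that $K$ is a Corson compact space which is Radon--Nikod\'ym, and the theorem of Orihuela--Schachermayer--Valdivia \cite{OSV} (see also \cite{St}) gives that $K$ is Eberlein compact. For a direct argument one would, for each $n\in\NN$, construct by transfinite recursion an increasing chain $\emptyset=W^n_0\subseteq W^n_1\subseteq\dots\subseteq W^n_{\mu_n}=K$ of relatively open subsets of $K$, with $W^n_\lambda=\bigcup_{\alpha<\lambda}W^n_\alpha$ at limit stages and $\sup\{d_\Gamma(x,y):x,y\in W^n_{\alpha+1}\setminus W^n_\alpha\}\le 1/n$ for every $\alpha$: at stage $\alpha+1$, if $W^n_\alpha\neq K$, apply fragmentation to the nonempty closed set $K\setminus W^n_\alpha$ to obtain an open $V$ with $\emptyset\neq V\cap(K\setminus W^n_\alpha)$ of $d_\Gamma$-diameter $\le 1/n$ and put $W^n_{\alpha+1}=W^n_\alpha\cup(V\cap K)$; the process stops because the $W^n_\alpha$ strictly increase. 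One then uses these chains, iterating the fragmentation inside the individual ``layers'' $W^n_{\alpha+1}\setminus W^n_\alpha$, to refine the canonical point-countable $T_0$-separating family $\{\{x\in K:x(\gamma)>q\}:\gamma\in\Gamma,\ q\in\mathbb{Q}\cap(0,1)\}$ of open $F_\sigma$ subsets of $K$ into a $\sigma$-point-finite $T_0$-separating family of open $F_\sigma$ subsets, which by the classical characterization of Eberlein compacta means that $K$ is Eberlein compact.

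I expect the converse to be the only real difficulty: a single layer $W^n_{\alpha+1}\setminus W^n_\alpha$ may still carry a rich coordinate structure, so obtaining genuine point-finiteness requires iterating the fragmentation within the layers and organizing the resulting tree of coordinates carefully — this is exactly the content of \cite{OSV} (and \cite{St}), and in the write-up I would either reproduce that argument or, more economically, simply cite it. The forward implication is routine apart from the small case analysis above, the passage through the Baire category theorem handling the possibility that arbitrarily many coordinates exceed $\varepsilon$ somewhere on $A$.
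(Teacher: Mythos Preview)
The paper does not prove this theorem; it is quoted as a known result with references to \cite{OSV} and \cite{St}, so there is no in-paper argument to compare against. Your proposal is correct and is exactly in the spirit of those references: a short direct proof of the forward implication, and a reduction of the converse to the Orihuela--Schachermayer--Valdivia theorem.

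A couple of remarks on the details. In the forward direction, the Baire-category reduction for the case $\sup_{w\in A}N(w)=\infty$ is genuinely needed (compact subsets of $c_0(\Gamma)$ can have $N$ unbounded; e.g.\ take $\{0\}\cup\{x_n:n\in\omega\}\subseteq c_0(\omega\times\omega)$ with $x_n(n,j)=1$ for $j<n$ and $0$ otherwise), and your passage from $\overline{W\cap A}$ back to $A$ via $U\cap W$ is clean. For the converse, your identification is right: $d_\Gamma$ is lower semicontinuous on $[0,1]^\Gamma\times[0,1]^\Gamma$ (it is a supremum of continuous functions), so fragmentation by $d_\Gamma$ makes $K$ Radon--Nikod\'ym compact, and then \cite{OSV} gives Eberlein. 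Your candid acknowledgement that a self-contained proof of this direction would essentially reproduce \cite{OSV} is accurate, and citing it is the economical choice.
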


\begin{lemma}\label{semi_Eb duplicate is Corson}
For any compact space $K$, if the Alexandroff duplicate $AD(K)$ is semi-Eberlein, then $K$ must be Corson compact.
\end{lemma}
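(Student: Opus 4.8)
The plan is to reduce the statement to the standard description of Corson compacta as subspaces of $\Sigma$-products of intervals, and then to the single assertion that a natural copy of $K$ inside $[0,1]^\Gamma$ consists of points of countable support. Concretely, I would first fix a semi-Eberlein embedding $h\colon AD(K)\to[0,1]^\Gamma$, i.e. one for which $D:=c_0(\Gamma)\cap h(AD(K))$ is dense in $h(AD(K))$. Since $c_0(\Gamma)\subseteq\Sigma([0,1]^\Gamma)$ (with respect to the base point $\mathbf 0$), every point of $D$ has countable support. As each point of $K\times\{1\}$ is isolated in $AD(K)$, it lies in $D$, so $h(x,1)\in c_0(\Gamma)$, and in particular $\supp h(x,1)$ is countable, for every $x\in K$. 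Moreover $x\mapsto(x,0)$ is a homeomorphism of $K$ onto the closed subspace $K\times\{0\}$ of $AD(K)$, so $f:=h(\,\cdot\,,0)\colon K\to[0,1]^\Gamma$ is a continuous embedding. It therefore suffices to prove that $\supp h(x,0)$ is countable for every $x\in K$ (then $f$ embeds $K$ into $\Sigma([0,1]^\Gamma)$ and $K$ is Corson). This is immediate when $x$ is isolated in $K$, since then $(x,0)$ is isolated in $AD(K)$ and $h(x,0)\in D$; so one fixes a non-isolated $x_0\in K$.

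The crucial gadget I would introduce is the family of perturbations $\delta_\gamma\colon K\to[-1,1]$, $\delta_\gamma(y):=h(y,1)(\gamma)-h(y,0)(\gamma)$, for $\gamma\in\Gamma$. Two facts: (a) for every $y\in K$ one has $\bigl(h(y,0)(\gamma)+\delta_\gamma(y)\bigr)_{\gamma\in\Gamma}=h(y,1)\in c_0(\Gamma)$; and (b) each $\delta_\gamma$ satisfies that $\{y\in K:|\delta_\gamma(y)|>\beta\}$ is finite for every $\beta>0$. For (b): if $z_\nu\to y$ in $K$ with $z_\nu\neq y$, then $(z_\nu,1)\to(y,0)$ and $(z_\nu,0)\to(y,0)$ in $AD(K)$, so $h(z_\nu,1)(\gamma)\to h(y,0)(\gamma)$ and $h(z_\nu,0)(\gamma)\to h(y,0)(\gamma)$, whence $\delta_\gamma(z_\nu)\to0$; it follows that $\{y:|\delta_\gamma(y)|>\beta\}$ is a closed discrete, hence finite, subspace of the compact space $K$.

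Using (a) and (b) I would argue by contradiction. Assume $\supp h(x_0,0)$ is uncountable and pick $\eta>0$ with $\Gamma_0:=\{\gamma\in\Gamma:h(x_0,0)(\gamma)>\eta\}$ uncountable; for $\gamma\in\Gamma_0$ set $O_\gamma:=\{y\in K:h(y,0)(\gamma)>\eta\}$, an open neighbourhood of $x_0$. If $y\in O_\gamma$ and $|\delta_\gamma(y)|\le\eta/2$ then $h(y,1)(\gamma)=h(y,0)(\gamma)+\delta_\gamma(y)>\eta/2$; since $h(y,1)\in c_0(\Gamma)$, only finitely many $\gamma$ have $h(y,1)(\gamma)>\eta/2$, so for each $y$ only finitely many $\gamma\in\Gamma_0$ satisfy both $y\in O_\gamma$ and $|\delta_\gamma(y)|\le\eta/2$. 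Combining this with (b) (and the fact that $|\delta_\gamma(y)|>\eta/2$ forces $h(y,1)(\gamma)>\eta/4$ or $h(y,0)(\gamma)>\eta/4$) gives, for each $y\in K$, the dichotomy: either $y$ lies in only finitely many of the $O_\gamma$ $(\gamma\in\Gamma_0)$, or $h(y,0)$ has infinitely many coordinates exceeding $\eta/4$. Writing $Z$ for the set of $y$ where the latter holds, this says that the uncountable family $\{O_\gamma\}_{\gamma\in\Gamma_0}$ of open neighbourhoods of $x_0$ is point-finite on $K\setminus Z$, that $x_0\in Z$, and that $\bigcap_{\gamma\in S}O_\gamma\subseteq Z$ for every infinite $S\subseteq\Gamma_0$. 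The hardest step — which I expect to be the main obstacle — is to turn this configuration into a contradiction: a compact space should not carry an uncountable family of open sets all containing one point, point-finite away from its "large-support" set $Z$, with all infinite subfamilies meeting only inside $Z$. I would attack this by a $\Delta$-system argument together with compactness of $K$ (replacing the $O_\gamma$ by cozero sets away from the finite exceptional sets $\{z:|\delta_\gamma(z)|>\eta/2\}$), plus an induction that successively peels off $Z$, using that $\{\{y\in K:h(y,0)(\gamma)>\beta\}:\gamma\in\Gamma,\ \beta\in\mathbb{Q}\cap(0,1)\}$ is $T_0$-separating on $K$. A clean special case already exhibiting the mechanism — and settling any $x_0$ that is a limit of a sequence in $K$ — is this: then $(x_0,0)=\lim_n(y_n,1)$ for suitable $y_n\to x_0$ with $y_n\neq x_0$, so $h(x_0,0)$ lies in the closure of $\{h(y_n,1):n\in\omega\}$, a set whose members are all supported on the countable set $C:=\bigcup_n\supp h(y_n,1)$; since $\{w\in[0,1]^\Gamma:w|_{\Gamma\setminus C}\equiv0\}$ is closed in $[0,1]^\Gamma$, we get $\supp h(x_0,0)\subseteq C$. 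Granting the general case, $\supp h(x,0)$ is countable for all $x$, hence $f$ embeds $K$ into $\Sigma([0,1]^\Gamma)$ and $K$ is Corson compact.
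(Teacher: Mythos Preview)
Your setup is correct: the identification of $K$ with $K\times\{0\}$, the fact that every $h(y,1)$ lies in $c_0(\Gamma)$, the perturbations $\delta_\gamma$, observation~(b), and the sequential special case are all valid. The gap is exactly where you flag it, and the outline you give does not close it. The dichotomy you derive is circular: for a fixed $y$, the set $\{\gamma\in\Gamma_0:y\in O_\gamma\}$ equals $\{\gamma\in\Gamma_0:h(y,0)(\gamma)>\eta\}$, and your ``second branch'' $y\in Z$ just says that $\{\gamma:h(y,0)(\gamma)>\eta/4\}$ is infinite. Both alternatives are statements about level sets of $h(\cdot,0)$, which is precisely the quantity you are trying to control at $x_0$. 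A $\Delta$-system on the finite sets $\{z:|\delta_\gamma(z)|>\eta/2\}$ gives no leverage (they all contain $x_0$, and nothing forces the petals to interact with the $O_\gamma$), and ``peeling off $Z$'' has no well-founded structure: $Z$ is defined by the same uncountable-support condition you are trying to rule out, so the induction has no smaller instance to appeal to. In short, the configuration you isolate is not obviously impossible, and nothing in the sketch produces a concrete contradiction with compactness.

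The paper bypasses this by importing one external structural fact. Since $c_0(\Gamma)\subseteq\Sigma([0,1]^\Gamma)$, the embedding $h$ also witnesses that $AD(K)$ is Valdivia; a result of Kalenda then says that if $h(x,0)\notin\Sigma([0,1]^\Gamma)$ there is a copy of $[0,\omega_1]$ inside $AD(K)$ with top point $(x,0)$ and all countable ordinals mapped into $h^{-1}(\Sigma)$. Restricting to limit ordinals lands this copy inside $K\times\{0\}$, giving $\phi:[0,\omega_1]\hookrightarrow K$ with $\phi(\omega_1)=x$. Now fix $N$ with $\{\gamma:h(x,0)(\gamma)>1/N\}$ uncountable, and pigeonhole the $\omega_1$ points $h(\phi(\alpha),1)\in c_0(\Gamma)$ among the closed levels $A_n=\{z:|\{\gamma:z(\gamma)>1/N\}|\le n\}$: some $A_m$ catches an uncountable $B\subseteq\phi([0,\omega_1))$. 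Every uncountable subset of $[0,\omega_1)$ accumulates at $\omega_1$, hence $(x,0)\in\overline{B\times\{1\}}\subseteq h^{-1}(A_m)$, contradicting the choice of $N$. This is precisely a transfinite version of your sequential special case, with ``countable union of supports'' replaced by a pigeonhole over the $A_n$; the ingredient your argument lacks is the long convergent sequence that Kalenda's theorem supplies.
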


\begin{proof}
Fix an embedding $h:AD(K)\to [0,1]^\Gamma$ such that $c_0(\Gamma)\cap h(AD(K))$ is dense in
$h(AD(K))$.
Striving for a contradiction, suppose that $K$ is not Corson. Since $K$ can be identified with the subspace $K\times\{0\}$ of $AD(K)$, there is $x\in K$ such that  $h(x,0)$ has uncountable support. By \cite[Proposition 2.7]{Ka} applied to the compact space $AD(K)$, we can find a homeomorphic embedding of $[0,\omega_1]$ into $AD(K)$ that sends $\omega_1$ to $(x,0)$ and ordinals below $\omega_1$ are sent to points which $h$ maps into the $\Sigma$-product $\Sigma([0,1]^{\Gamma})$. Restricting this embedding to the set of limit ordinals and identifying $K$ with $K\times \{0\}$, we may suppose that there is a homeomorphic embedding
$\phi:[0,\omega_1]\to K$ such that:
\begin{align}
&\phi(\omega_1)=x\\
&h(x,0)\notin \Sigma([0,1]^{\Gamma}),\\
&h(\phi(\alpha),0)\in \Sigma([0,1]^{\Gamma}),\quad \mbox{for }\alpha<\omega_1
\end{align}

Since $h(x,0)$ has uncountable support, we can find a positive integer $N$ such that
\begin{equation}\label{contradiction1}
 \{\gamma\in \Gamma:|h(x,0)(\gamma)|>1/N\} \mbox{ is uncountable.}
\end{equation}

Denote
$L=\phi([0,\omega_1])$. For $n\in \omega$ put
$$A_n=\{z\in [0,1]^\Gamma:|\{\gamma\in \Gamma:z(\gamma)|>1/N\}|\leq n\}.$$
Clearly, each $A_n$ is closed in $[0,1]^\Gamma$. Since the set $c_0(\Gamma)\cap h(AD(K))$ is dense in $h(AD(K))$, it contains all of the isolated points in $h(AD(K))$. Therefore, $$K\times\{1\}\subseteq\bigcup_{n\in \omega}h^{-1}(A_n).$$
The set $L\times\{1\}$ is uncountable so we can find $m\in \omega$ with $h^{-1}(A_m)\cap (L\times\{1\})$ being uncountable. Let $B\subseteq L$ be such that $B\times\{1\}=h^{-1}(A_m)\cap(L\times\{1\})$. Since $B\subseteq L$ is uncountable, the point $x=\phi(\omega_1)\in L$ is an accumulation point of $B$. It follows that the point $(x,0)$ is an accumulation point of the set $B\times\{1\}=h^{-1}(A_m)\cap(L\times\{1\})$. Moreover, since $h^{-1}(A_m)$ is closed in $AD(K)$, we get $(x,0)\in h^{-1}(A_m)$, contradicting \eqref{contradiction1}.
\end{proof}

\begin{lemma}\label{semi-Eb duplicate is fragmentable}
Let $K$ be a compact space. If
$h$ is an embedding of the Alexandroff duplicate $AD(K)$ into $[0,1]^\Gamma$ such that $c_0(\Gamma)\cap h(AD(K))$ is dense in $h(AD(K))$, then the metric $d_\Gamma$ fragments the compactum $h(K\times\{0\})$.
\end{lemma}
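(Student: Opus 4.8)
The plan is to show that for any nonempty closed subset $A$ of $h(K\times\{0\})$ and any $\varepsilon>0$, there is an open set $U$ in $[0,1]^\Gamma$ with $U\cap A\neq\emptyset$ and $d_\Gamma$-diameter of $U\cap A$ at most $\varepsilon$. Write $A = h(F\times\{0\})$ for a nonempty closed $F\subseteq K$; by compactness we may replace $K$ by $F$, so it suffices to treat the case $A = h(K\times\{0\})$ itself. Fix $N$ with $2/N<\varepsilon$. As in the proof of Lemma \ref{semi_Eb duplicate is Corson}, since $c_0(\Gamma)\cap h(AD(K))$ is dense in $h(AD(K))$ it contains every isolated point, hence $h(K\times\{1\})\subseteq\bigcup_{n\in\omega}h^{-1}(A_n)$, where
$$A_n=\{z\in [0,1]^\Gamma:|\{\gamma\in\Gamma:z(\gamma)>1/N\}|\le n\}.$$
Each $A_n$ is closed in $[0,1]^\Gamma$. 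Since $K\times\{1\}$ is dense in $AD(K)$ (the isolated points are dense), we get $AD(K)=\bigcup_n \operatorname{cl}(h^{-1}(A_n)\cap (K\times\{1\}))\subseteq\bigcup_n h^{-1}(A_n)$, so in particular $K\times\{0\}\subseteq\bigcup_n h^{-1}(A_n)$.

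Now $AD(K)$ is a compact space written as a countable union of closed sets $h^{-1}(A_n)$, so by the Baire category theorem (applied inside the compact Hausdorff space $AD(K)$) there is some $m$ such that $h^{-1}(A_m)\cap (K\times\{0\})$ has nonempty interior \emph{relative to $K\times\{0\}$}. Actually, to be careful: I would first intersect with $K\times\{0\}$ (a closed copy of $K$, hence Baire) and apply Baire there. Pick a point $(x_0,0)$ in that relative interior and an open neighborhood $W$ of $x_0$ in $K$ with $W\times\{0\}\subseteq h^{-1}(A_m)$. The point of choosing $m$ \emph{minimal} (or just this $m$) is that on $A_m\setminus A_{m-1}$ the "large coordinates" are controlled: if $z\in A_m$ and $z$ is not in $A_{m-1}$, it has exactly $m$ coordinates exceeding $1/N$. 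To pin these coordinates down locally, I would shrink further: among points of $h(W\times\{0\})$ lying in $A_m\setminus A_{m-1}$, pick one, say $z_0=h(x_0',0)$ with large-coordinate set $S=\{\gamma:z_0(\gamma)>1/N\}$, $|S|=m$; by continuity of $h$ and the $m$ coordinate projections, there is an open neighborhood $V$ of $x_0'$ in $W$ such that every $h(x,0)$ with $x\in V$ has $z(\gamma)>1/N$ for all $\gamma\in S$ (so its large-coordinate set contains $S$), hence equals $S$ exactly since the point is in $A_m$. Thus for $x,y\in V$, $h(x,0)$ and $h(y,0)$ have the \emph{same} set $S$ of coordinates exceeding $1/N$, and on $\Gamma\setminus S$ both are $\le 1/N$; after shrinking $V$ once more using continuity of the finitely many projections $\{\pi_\gamma:\gamma\in S\}$ so that each oscillates by less than $1/N$ on $h(V\times\{0\})$, we get $d_\Gamma(h(x,0),h(y,0))<2/N<\varepsilon$ for all $x,y\in V$. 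Taking $U$ to be any open subset of $[0,1]^\Gamma$ with $U\cap h(K\times\{0\})=h(V\times\{0\})$ finishes the argument.

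The main obstacle — and the step that needs care — is the handoff between the Baire category argument (which only gives nonempty relative interior inside $K\times\{0\}$, not control of which coordinates are large) and the local stabilization of the finite set $S$ of large coordinates. One must be slightly attentive that after fixing $m$ as the relevant index, the set $h(V\times\{0\})$ might still contain points in $A_{m'}$ for $m'<m$; this is harmless, since such a point has \emph{fewer} than $m$ coordinates above $1/N$, but then the continuity argument forcing all $\gamma\in S$ to stay above $1/N$ on $V$ already excludes it once $V$ is small enough, so in fact every point of $h(V\times\{0\})$ has large-coordinate set exactly $S$. Everything else is routine continuity and the elementary estimate $d_\Gamma\le 2/N$.
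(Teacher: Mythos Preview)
Your argument has a genuine gap at the transfer step. You claim that density of $K\times\{1\}$ in $AD(K)$ yields
\[
AD(K)=\bigcup_{n}\operatorname{cl}\bigl(h^{-1}(A_n)\cap(K\times\{1\})\bigr)\subseteq\bigcup_n h^{-1}(A_n),
\]
but the closure of a countable increasing union is not the union of the closures, so the displayed equality is unjustified --- and in fact the conclusion $h(K\times\{0\})\subseteq\bigcup_n A_n$ can fail. For a concrete counterexample take $K=\omega+1$, $\Gamma=\omega$, set $h(\omega,0)=(1,1,1,\dots)$, $h(\omega,1)=0$, $h(n,1)=(\underbrace{1,\dots,1}_{n},0,\dots)$ and $h(n,0)=(\underbrace{1,\dots,1}_{n},\tfrac12,0,\dots)$. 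This is an injective continuous map from the compact space $AD(K)$, hence an embedding; every isolated point lands in $c_0(\omega)$ so the semi-Eberlein density condition holds; yet $h(\omega,0)=(1,1,\dots)$ has infinitely many coordinates above $1/N$ for every $N\ge2$, so it lies in no $A_n$. Thus your Baire argument on $K\times\{0\}$ cannot get started. (There is also a minor omission: $K\times\{1\}$ is dense in $AD(K)$ only when $K$ has no isolated points, a reduction you do not make explicit, though it is trivially handled.)

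The fix is to reverse the order of the two steps: apply Baire \emph{first} and transfer \emph{afterwards}. Work in a closed $C\subseteq K$ without isolated points, set $C_n=\{x\in C:h(x,1)\in A_n\}$, and apply Baire in $C$ to get an open $U$ with $\emptyset\neq U\cap C\subseteq\overline{C_k}$ for a single $k$. Then for each $x\in U\cap C$ one checks that $x$ is a limit point of $C_k$, so $(x,0)$ lies in the closure of $C_k\times\{1\}\subseteq h^{-1}(A_k)$; since $A_k$ is closed this gives $h(x,0)\in A_k$. From here your stabilization of the finite large-coordinate set $S$ and the final $d_\Gamma$-estimate go through exactly as you wrote them (the paper takes $m=\max\{|\{\gamma:h(x,0)(\gamma)>\varepsilon\}|:x\in U\cap C\}$ rather than your ``minimal $m$'', but either device works once one has $h(U\cap C\times\{0\})\subseteq A_k$).
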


\begin{proof}
Fix a closed subset $C$ of $K$ and let $\varepsilon>0$. We need to find an open subset $W$ of $K$ such that $W\cap C\neq \emptyset$ and for all $x,y\in W\cap C$ we have
$$d_\Gamma(h(x,0),h(y,0))\leq \varepsilon.$$
We can assume that $C$ has no isolated points, for otherwise we are easily done.
For $n\in \omega$, let
\begin{align*}
 &A_n=\{z\in [0,1]^\Gamma:|\{\gamma\in \Gamma:z(\gamma)>\varepsilon\}|\leq n\} \mbox{ and }\\ &C_n=\{x\in C:h(x,1)\in A_n\}.
\end{align*}
Since each point of the form $(x,1)$ is isolated in $AD(K)$ and $c_0(\Gamma)$ is dense in the image of $AD(K)$ under the map $h$, we infer that $C=\bigcup\{C_n:n\in \omega\}$. The set $C$ is compact, so by the Baire category theorem
we can find $k$ and a nonempty open subset $U$ of $K$ so that $\emptyset\neq U\cap C\subseteq \overline{C_k}$ (the bar is the closure in $C$).
Let us show that
\begin{equation}\label{h(U)}
 h(x,0)\in A_k \quad\mbox{for all }x\in U\cap C
\end{equation}
Pick $x\in U\cap C$. Since $C$ has no isolated points and $U\cap C\subseteq \overline{C_k}$,
the point $(x,0)$ is in the closure of the set $\{(y,1):y\in C_k\}\subseteq h^{-1}(A_k)$. Clearly, the latter set is closed in $AD(K)$ so $(x,0)\in h^{-1}(A_k)$. This gives \eqref{h(U)}. Put
$$m=\max\{|\{\gamma\in \Gamma:h(x,0)(\gamma)>\varepsilon\}|:x\in U\cap C\}.$$
By \eqref{h(U)}, $m$ is well defined and satisfies $m\leq k$.
Pick $a\in U\cap C$ such that $|\{\gamma\in \Gamma:h(a,0)(\gamma)>\varepsilon\}|=m$. Enumerate
$$\{\gamma_1,\ldots , \gamma_m\}=\{\gamma\in \Gamma:h(a,0)(\gamma)>\varepsilon\}.$$
For each $i\leq m$, put $\delta_i=h(a,0)(\gamma_i)-\varepsilon$ and let
$$\delta=\min\{\varepsilon/2, \delta_1,\ldots , \delta_m\}.$$
Consider the following basic open neighborhood $P$ of $h(a,0)$ in $[0,1]^\Gamma$
$$P=\{z\in [0,1]^\Gamma:|z(\gamma_i)-h(a,0)(\gamma_i)|<\delta\mbox{ for all }i\leq m\}.$$
Since $K$ can be identified with $K\times\{0\}$, there is an open subset $V$ of $K$ such that
$$h^{-1}(P)\cap(K\times\{0\})=V\times\{0\}$$
Let us verify that the set $W=U\cap V$ is as required. First note that $a\in W\cap C$ so the latter set is nonempty and relatively open in $C$. It easily follows from the definition of $m$ and the choice of $\delta$ that
\begin{equation}\label{ten sam zbior}
 \{\gamma\in \Gamma:h(x,0)(\gamma)>\varepsilon\}=\{\gamma_1,\ldots, \gamma_m\}, \mbox{ for }x\in W\cap C.
\end{equation}
Fix $x,y\in W\cap C$ and let $\gamma\in \Gamma$ be arbitrary. If $\gamma\notin\{\gamma_1,\ldots ,\gamma_m\}$,
then $0\leq h(x,0)(\gamma)\leq \varepsilon$ and $0\leq h(y,0)(\gamma)\leq \varepsilon$, by \eqref{ten sam zbior}. Thus, $|h(x,0)(\gamma)-h(y,0)(\gamma)|\leq\varepsilon$. If $\gamma\in \{\gamma_1,\ldots ,\gamma_m\}$, then by the definition of $P$ we have
\begin{align*}
|h(x,0)(\gamma)-h(y,0)(\gamma)|&\leq |h(x,0)(\gamma)-h(a,0)(\gamma)|+|h(y,0)(\gamma)-h(a,0)(\gamma)|<\\
&\delta+\delta\leq \varepsilon/2+\varepsilon/2=\varepsilon.
\end{align*}
Hence, $d_\Gamma(h(x,0),h(y,0))\leq \varepsilon$.
\end{proof}

Using Lemmas \ref{semi_Eb duplicate is Corson} and \ref{semi-Eb duplicate is fragmentable} and appealing to Theorem \ref{theorem_OSV} and \cite[U.358]{Tk1} we get the following result.

\begin{thrm}\label{semi-Eberlein duplicates}
 The following conditions are equivalent for any compact space $K$:
 \begin{enumerate}[(i)]
  \item $K$ is Eberlein compact.
  \item The Alexandroff duplicate $AD(K)$ is Eberlein compact.
  \item The Alexandroff duplicate $AD(K)$ is semi-Eberlein compact.
 \end{enumerate}
\end{thrm}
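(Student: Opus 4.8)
The plan is to establish the cycle $(i)\Rightarrow(ii)\Rightarrow(iii)\Rightarrow(i)$. The implication $(i)\Rightarrow(ii)$ is precisely the cited equivalence \cite[U.358]{Tk1}: $AD(K)$ is Eberlein compact whenever $K$ is. For $(ii)\Rightarrow(iii)$ it suffices to recall that every Eberlein compactum is semi-Eberlein; indeed, if $h$ embeds $AD(K)$ as a compact subset of $c_0(\Gamma)\subseteq[0,1]^\Gamma$, then $c_0(\Gamma)\cap h(AD(K))=h(AD(K))$ is trivially dense in $h(AD(K))$, which is exactly the defining property of a semi-Eberlein embedding.

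The only implication that requires genuine work is $(iii)\Rightarrow(i)$, and here the two preceding lemmas supply everything. Assume $AD(K)$ is semi-Eberlein and fix an embedding $h\colon AD(K)\to[0,1]^\Gamma$ with $c_0(\Gamma)\cap h(AD(K))$ dense in $h(AD(K))$. By Lemma \ref{semi_Eb duplicate is Corson}, $K$ is Corson compact. Next, observe that $K\times\{0\}$ is a closed subspace of $AD(K)$ (its complement $K\times\{1\}$ consists of isolated points, hence is open), so the restriction $h\restriction_{K\times\{0\}}$ is a homeomorphic embedding of $K$ into $[0,1]^\Gamma$ with image $h(K\times\{0\})$. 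By Lemma \ref{semi-Eb duplicate is fragmentable}, the uniform metric $d_\Gamma$ fragments $h(K\times\{0\})$. Thus $K$ is Corson compact and embeds into some $[0,1]^\Gamma$ in such a way that $d_\Gamma$ fragments the copy, so $K$ is Eberlein compact by Theorem \ref{theorem_OSV}.

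The subtlety to flag is that passing from $AD(K)$ to the closed subspace $K\times\{0\}$ destroys the density of the $c_0(\Gamma)$-part ($h(K\times\{0\})$ may meet $c_0(\Gamma)$ only in a small set when $K$ is, say, Corson but not Eberlein), so one cannot simply conclude that $K$ is itself semi-Eberlein in these coordinates and then invoke a direct criterion. Instead the argument genuinely needs the fragmentability output of Lemma \ref{semi-Eb duplicate is fragmentable} together with the Corson conclusion of Lemma \ref{semi_Eb duplicate is Corson}, combined through the fragmentability characterization of Eberlein compacta (Theorem \ref{theorem_OSV}). With both lemmas available there is no remaining obstacle, and the deduction of the theorem is a short assembly.
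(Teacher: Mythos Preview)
Your proposal is correct and follows essentially the same approach as the paper: the paper's own proof is the one-line ``Using Lemmas \ref{semi_Eb duplicate is Corson} and \ref{semi-Eb duplicate is fragmentable} and appealing to Theorem \ref{theorem_OSV} and \cite[U.358]{Tk1}'', and you have simply unpacked that sentence accurately into the cycle $(i)\Rightarrow(ii)\Rightarrow(iii)\Rightarrow(i)$. Your closing remark about why one cannot bypass fragmentability by restricting to $K\times\{0\}$ is a useful clarification but not part of the paper's (terse) argument.
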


\begin{rem}\label{remark_plenty of Corson non semi-Eb}
The first example of a Corson compact space which is not semi-Eberlein was given by Kubi\'s and Leiderman in \cite{KL}. Theorem \ref{semi-Eberlein duplicates} can be used to produce different examples of that sort. Indeed, it is sufficient to take a Corson compact space $K$ that is not Eberlein compact (see, e.g., \cite{Ne}). Then $AD(K)$ is Corson compact but not semi-Eberlein according to Theorem \ref{semi-Eberlein duplicates}.
\end{rem}

\begin{thrm}\label{theorem-duplicates}
 The following conditions are equivalent for any compact space $K$:
 \begin{enumerate}[(i)]
  \item $K$ is $NY$ compact
  \item The Alexandroff duplicate $AD(K)$ is $NY$ compact
  \item The Alexandroff duplicate $AD(K)$ is $NY$-Valdivia compact.
 \end{enumerate}
\end{thrm}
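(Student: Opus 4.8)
The plan is to prove the cycle $(i)\Rightarrow(ii)\Rightarrow(iii)\Rightarrow(i)$, following the pattern already established for semi-Eberlein compacta in Theorem \ref{semi-Eberlein duplicates}. The implication $(i)\Rightarrow(ii)$ is already known, being recorded above as \cite[Proposition 3.8]{MPZ} (or \cite[U.358]{Tk1}): the Alexandroff duplicate $AD(K)$ is $NY$ compact precisely when $K$ is. The implication $(ii)\Rightarrow(iii)$ is immediate, since every $NY$ compact space is $NY$-Valdivia (embed into $\sigma(\prod Q_\gamma)$, which is trivially dense in itself, or more carefully: an $NY$ compact space sits inside a $\sigma$-product, hence the $\sigma$-product meets it densely). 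So the entire content is in $(iii)\Rightarrow(i)$.

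For $(iii)\Rightarrow(i)$, I would use the characterization of Theorem \ref{characterization_NY_2}, namely that $K$ is $NY$ compact iff $K$ is Eberlein compact and M-scattered. First, assume $AD(K)$ is $NY$-Valdivia; since $NY$-Valdivia compacta are in particular semi-Eberlein (as noted in the Introduction — an embedding into $\prod Q_\gamma$ with $\sigma(\prod Q_\gamma)$ dense in the image yields, after composing each Hilbert-cube factor $Q_\gamma$ with a further embedding into $[0,1]^{\omega}$ and observing that the $\sigma$-product of the big product is dense, a semi-Eberlein-type embedding into $[0,1]^{\Gamma\times\omega}$), Lemma \ref{semi_Eb duplicate is Corson} gives that $K$ is Corson compact. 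Next, I want Eberlein: by Theorem \ref{theorem_OSV} it suffices to produce an embedding of $K$ into a cube whose uniform metric fragments the image. Here I would adapt Lemma \ref{semi-Eb duplicate is fragmentable}: given an embedding $h:AD(K)\to\prod_{\gamma}Q_\gamma$ with $\sigma(\prod_\gamma Q_\gamma)\cap h(AD(K))$ dense, compose with coordinatewise embeddings of the Hilbert cubes into $[0,1]^\omega$ to get $\tilde h:AD(K)\to[0,1]^{\Gamma\times\omega}=[0,1]^{\Lambda}$; the key point is that $\sigma(\prod_\gamma Q_\gamma)$ maps into $c_0$-like sets, or more precisely that for each $\varepsilon>0$ the set of points of $[0,1]^{\Lambda}$ with only finitely many coordinates exceeding $\varepsilon$ is a countable union of the closed sets $A_n$, and the isolated points $K\times\{1\}$ land in their union. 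Then the argument of Lemma \ref{semi-Eb duplicate is fragmentable} goes through verbatim to show $d_{\Lambda}$ fragments $\tilde h(K\times\{0\})$, so $K$ is Eberlein compact.

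It remains to verify $K$ is M-scattered. This is the step I expect to be the main obstacle, since the previous two lemmas about duplicates were only designed to extract Corson-ness and fragmentability, not M-scatteredness, and M-scatteredness genuinely separates $NY$ from Eberlein (recall $A(\omega_1)^\omega$). I would argue as follows: take a nonempty closed $A\subseteq K$ and consider $A\times\{1\}\subseteq AD(K)$, a set of isolated points whose closure in $AD(K)$ is $A\times\{0,1\}$, which (being closed in the $NY$-Valdivia space $AD(K)$) is again $NY$-Valdivia, and its subset $A\times\{1\}$ of isolated points lies in every dense set, in particular in the $\sigma$-product. One then shows that a closed subspace $Y$ of a space embedded in $\prod_\gamma Q_\gamma$ with the $\sigma$-product dense, all of whose isolated points lie in the $\sigma$-product, must have a nonempty open second-countable subset: indeed $Y'=Y\setminus\{$isolated points$\}$ is closed and nowhere dense? — no, more carefully, since the isolated points of $\overline{A\times\{1\}}$ are dense in it and lie in the $\sigma$-product, a Baire-category/counting argument (as in the $A_n$ decomposition above, now using the supports rather than $\varepsilon$-levels) produces an open $U$ in $A\times\{0,1\}$ on which all points have supports contained in a fixed countable set, hence $U$ is second-countable; intersecting with $A\times\{0\}\cong A$ gives a nonempty relatively open second-countable subset of $A$. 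Thus $K$ is M-scattered, and by Theorem \ref{characterization_NY_2} $K$ is $NY$ compact, completing the proof.
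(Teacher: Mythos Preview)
Your overall strategy matches the paper's: reduce to showing $K$ is Corson and M-scattered (Theorem~\ref{characterization_NY_2}), obtain Corson from Lemma~\ref{semi_Eb duplicate is Corson} via the implication $NY$-Valdivia $\Rightarrow$ semi-Eberlein, and attack M-scatteredness with a Baire-category argument on support sizes of the isolated points. (The detour through Eberlein is unnecessary---Corson suffices---and in any case could be had for free from Theorem~\ref{semi-Eberlein duplicates} rather than by re-running Lemma~\ref{semi-Eb duplicate is fragmentable}.)

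There is, however, a real gap in the M-scattered step. First a minor point: the parenthetical ``being closed in the $NY$-Valdivia space $AD(K)$, is again $NY$-Valdivia'' is false as a general principle---closed subspaces of Valdivia-type compacta need not inherit the property. What is actually true, and what you essentially say next, is that $A\times\{1\}$ consists of isolated points of $AD(K)$ and therefore lies in the $\sigma$-product; but for $A\times\{1\}$ to be dense in $A\times\{0,1\}$ you must first reduce to the case where $A$ has no isolated points. The paper arranges this by working with the stabilized M-derivative $C=K^{(\alpha)}=K^{(\alpha+1)}$, which automatically has none.

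The substantive gap is in the conclusion of the Baire argument. Baire does \emph{not} produce an open set on which all supports lie in a fixed countable subset of $\Gamma$; it only gives an open $U\subseteq C$ and an $n$ with $h(x,0)\in\sigma_n(\prod_\gamma Q_\gamma)$ for all $x\in\overline{U}$ (exactly as in Lemma~\ref{semi-Eb duplicate is fragmentable}, with supports replacing $\varepsilon$-levels). This does not make $\overline{U}$ second-countable: $A(\omega_1)$ embeds in $\sigma_1(\{0,1\}^{\omega_1})$. The missing step, which the paper supplies, is that $\overline{U}$, lying in $\sigma_n\subseteq\sigma$, is $NY$ compact and hence M-scattered by Theorem~\ref{characterization_NY_1}; \emph{that} is what yields a nonempty relatively open second-countable subset and contradicts $C=C'$. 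Without this extra pass through M-scatteredness of compacta in $\sigma_n$, your argument does not close.
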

\begin{proof}
 The equivalence $(i)\Leftrightarrow (ii)$ is known (see \cite[Proposition 3.8]{MPZ}) and the implication $(ii)\Rightarrow (iii)$ is obvious. It remains to show $(iii)\Rightarrow (i)$. To this end, assume that $AD(K)$ is $NY$-Valdivia and let $$h:AD(K)\to \prod_{\gamma\in \Gamma}Q_\gamma$$ be
 an embedding of $AD(K)$ into a product of the Hilbert cubes $Q_\gamma$ such that the set $h(AD(K))\cap \sigma(\prod_{\gamma\in \Gamma} Q_\gamma)$ is dense in $h(AD(K))$.
 Let
 $$\psi:\prod_{\gamma\in \Gamma}Q_\gamma\to [0,1]^{\omega\times \Gamma}$$
be a homeomorphism that identifies $\prod_{\gamma\in \Gamma}Q_\gamma$ with $[0,1]^{\omega\times \Gamma}$. Note that the set $\Sigma([0,1]^{\omega\times \Gamma})\cap \psi(h(AD(K)))$ is dense in $\psi(h(AD(K)))$.

Since $NY$-Valdivia compact space is semi-Eberlein, we infer from Lemma \ref{semi_Eb duplicate is Corson} that $K$ must be Corson compact. Hence,
according to Theorem \ref{characterization_NY_2}, it is sufficient to check that $K$ is M-scattered.
Suppose to the contrary that $K$ is not M-scattered.
Then for some ordinal $\alpha$ and some nonempty compact subset $C$ of $K$ we have $C=K^{(\alpha)}=K^{(\alpha+1)}$, i.e., the M-derivative stabilizes at some nonempty compact set $C\subseteq K$. In particular, $C$ has no isolated points.

For $n\in \omega$, let
$$A_n=\{x\in C:|\supp h(x,1)|\leq n\}.$$
Since each point of the form $(x,1)$ is isolated in $AD(K)$ and $h$ is an $NY$-Valdivia embedding, we have $h(x,1)\in  \sigma(\prod_{\gamma\in \Gamma}Q_\gamma)$ for every $x\in K$. Hence $C=\bigcup_{n\in \omega}A_n$. By the Baire category theorem, for some $n\in \omega$, there is a nonempty open subset $U$ of $C$ with $U\subseteq \overline{A_n}$. Let us show that
\begin{equation}\label{h(U) is in sigma-product}
 h(x,0)\in \sigma_n(\prod_{\gamma\in \Gamma}Q_\gamma)\quad \mbox{for all }x\in \overline{U}
\end{equation}

Pick $x\in \overline{U}$. Since $C$ has no isolated points,
the point $(x,0)$ is in the closure of the set $\{(x,1):x\in A_n\}\subseteq h^{-1}(\sigma_n(\prod_{\gamma\in \Gamma}Q_\gamma))$. Clearly, the latter set is closed in $AD(K)$ so $\supp(h(x,0))\leq n$. This gives \eqref{h(U) is in sigma-product}. Since $K$ is homeomorphic to the subspace $K\times \{0\}$ of $AD(K)$, it follows from \eqref{h(U) is in sigma-product} that $\overline{U}$ is $NY$ compact. In particular, by Theorem \ref{characterization_NY_1}, $\overline{U}\subseteq C$ is M-scattered. So there is an open subset $V$ of $K$ such that $\emptyset \neq V\cap \overline{U}$ is second countable. Now, the set $U\cap V$ is nonempty open second countable subset of $C$. This contradicts the fact that $C=K^{(\alpha)}=K^{(\alpha+1)}$ and proves that $K$ is M-scattered.
\end{proof}

\begin{exa}\label{example 1}
Let $K=AD(A(\omega_1)^\omega)$ be the Alexandroff duplicate of the countable product $A(\omega_1)^\omega$, where $A(\omega_1)$ is the one-point compactification of a discrete set of size $\omega_1$.
Since the Alexandroff duplicate of a uniform Eberlein compactum is uniform Eberlein compact (see, e.g., \cite[Example 4.2]{KM}), the space $K$ is uniform Eberlein compact.
It is easily seen that $A(\omega_1)^\omega$ is not M-scattered so, by Theorem \ref{characterization_NY_1}, it is not $NY$ compact. Now, it follows from Theorem \ref{theorem-duplicates} that $K$ is not $NY$-Valdivia.
\end{exa}

The above example shows that uniform Eberlein compacta are not necessarily $NY$-Valdivia.
It is well known that every uniform Eberlein compact space is a continuous image of a closed subspace of the space $A(\kappa)^\omega$, where $A(\kappa)$ is the one-point compactification of a discrete set of size $\kappa$ (see \cite{BRW}). This motivates the following question:

\begin{question}
 Is every continuous image of $A(\omega_1)^\omega$ $NY$-Valdivia?
\end{question}

Continuous images of products of the form $A(\kappa)^\lambda$ are called \textit{polyadic spaces}. It is known that if $K$ is polyadic, then the character of $K$ (i.e., the minimal size of a local basis in $K$) and the weight of $K$
coincide \cite[Theorem 6]{G}. Therefore, the space $AD(A(\omega_1)^\omega)$ is not polyadic.

\medskip

For $\omega$-Corson compacta we have the following result analogous to Theorem \ref{theorem-duplicates}:

\begin{thrm}\label{theorem-duplicates-omega-Corson}
 The following conditions are equivalent for any compact space $K$:
 \begin{enumerate}[(i)]
  \item $K$ is $\omega$-Corson.
  \item The Alexandroff duplicate $AD(K)$ is $\omega$-Corson
  \item The Alexandroff duplicate $AD(K)$ is $\omega$-Valdivia compact.
 \end{enumerate}
\end{thrm}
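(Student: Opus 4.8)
The plan is to mirror the proof of Theorem \ref{theorem-duplicates} almost verbatim, replacing the Hilbert cubes $Q_\gamma$ by unit intervals, the $\sigma$-product $\sigma(\prod_{\gamma}Q_\gamma)$ by $\sigma([0,1]^\Gamma)$, and Theorem \ref{characterization_NY_2} by Theorem \ref{characterization_omega_Corson}. The equivalence $(i)\Leftrightarrow(ii)$ is \cite[Proposition 3.8]{MPZ}, and $(ii)\Rightarrow(iii)$ is immediate: an $\omega$-Corson compactum is by definition contained in $\sigma([0,1]^\Gamma)$, so the $\sigma$-product is automatically dense in its image. Hence the whole content is $(iii)\Rightarrow(i)$.

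So fix an embedding $h\colon AD(K)\to[0,1]^\Gamma$ with $\sigma([0,1]^\Gamma)\cap h(AD(K))$ dense in $h(AD(K))$. Since $\sigma([0,1]^\Gamma)\subseteq c_0(\Gamma)$, the space $AD(K)$ is semi-Eberlein, so Lemma \ref{semi_Eb duplicate is Corson} gives that $K$ is Corson compact (alternatively, $AD(K)$ is $NY$-Valdivia, so $K$ is $NY$ compact, in particular Corson, by Theorem \ref{theorem-duplicates}). By the implication $(ii)\Rightarrow(i)$ of Theorem \ref{characterization_omega_Corson} it then suffices to show that every nonempty subspace of $K$ has a nonempty relatively open, finite-dimensional, second-countable subspace. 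Suppose this fails; fix a nonempty $A\subseteq K$ none of whose nonempty relatively open subsets is finite-dimensional and second countable, and put $C=\overline{A}$. Then $C$ is compact and, since $A$ is dense in $C$, the compactum $C$ has no nonempty open (in $C$) finite-dimensional second-countable subset; in particular $C$ has no isolated points.

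Now I would run the Baire category argument exactly as in Theorem \ref{theorem-duplicates}. For $n\in\omega$ put $A_n=\{x\in C:|\supp h(x,1)|\le n\}$. Since every $(x,1)$ is isolated in $AD(K)$, it is mapped by $h$ into the dense set $\sigma([0,1]^\Gamma)$, so $C=\bigcup_n A_n=\bigcup_n\overline{A_n}$; as $C$ is a Baire space there is a nonempty open $U\subseteq C$ with $U\subseteq\overline{A_n}$ for some $n$. Using that $C$ has no isolated points and that $A_n\cap U$ is dense in $U$, one checks (as in Theorem \ref{theorem-duplicates}) that every point $(x,0)$ with $x\in\overline{U}$ lies in the closure in $AD(K)$ of $\{(y,1):y\in A_n\}\subseteq h^{-1}(\sigma_n([0,1]^\Gamma))$; since $\sigma_n([0,1]^\Gamma)$ is closed in $[0,1]^\Gamma$, this yields $h(\overline{U}\times\{0\})\subseteq\sigma_n([0,1]^\Gamma)\subseteq\sigma([0,1]^\Gamma)$, so the compact subspace $\overline{U}$ of $C$ embeds into $\sigma([0,1]^\Gamma)$ and is therefore $\omega$-Corson. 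Finally, apply the implication $(i)\Rightarrow(ii)$ of Theorem \ref{characterization_omega_Corson} to $\overline{U}$ and to its nonempty subspace $U$ (which is relatively open in $\overline{U}$, being open in $C$): we obtain a nonempty $W\subseteq U$ that is relatively open in $U$, finite-dimensional and second countable; but $U$ is open in $C$, so $W$ is open in $C$, contradicting the choice of $C$. Hence $K$ has the required subspace property and is thus $\omega$-Corson.

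This is essentially a transcription of the proof of Theorem \ref{theorem-duplicates}, and I do not expect any deep obstacle; the one point that needs care — and the only place the two arguments genuinely diverge — is the bookkeeping needed to make the loop close in the $\omega$-Corson world rather than the $NY$ one. Concretely, one must (a) convert the failure of the "every nonempty subspace" property into the existence of a \emph{compact} $C$ with no nonempty open finite-dimensional second-countable subset (done by passing to $C=\overline{A}$, which also forces $C$ to have no isolated points), and (b) observe that the relevant building blocks $\sigma_n([0,1]^\Gamma)$ are again contained in $\sigma([0,1]^\Gamma)$, so the compactum $\overline{U}$ delivered by the Baire argument is itself $\omega$-Corson and Theorem \ref{characterization_omega_Corson} can be fed back in; note that, unlike what one might expect, one does \emph{not} need the (true) finite-dimensionality of $\sigma_n([0,1]^\Gamma)$ for this.
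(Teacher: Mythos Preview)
Your proof is correct, and the overall architecture---Baire category on the duplicate to land in $\sigma_n$, then feed back into Theorem~\ref{characterization_omega_Corson}---is the same as the paper's. The route differs in two places, however. First, the paper does not argue by contradiction: for an arbitrary nonempty $A\subseteq K$ with closure $C$, it first invokes Theorem~\ref{theorem-duplicates} to get that $K$ is $NY$ compact and hence $M$-scattered, uses this to pick a nonempty relatively open \emph{second-countable} $U\subseteq C$, and only then runs the Baire argument inside $U$. Second, having obtained $h(\overline{V}\times\{0\})\subseteq\sigma_n([0,1]^\Gamma)$, the paper finishes by quoting the external fact (from \cite{EP}) that $\dim\sigma_n([0,1]^\Gamma)\le n$, so that $V$ itself is already finite-dimensional and second-countable. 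You instead observe that $\overline{U}$ is $\omega$-Corson (since $\sigma_n\subseteq\sigma$) and loop back through Theorem~\ref{characterization_omega_Corson} to extract the desired open set, which lets you avoid citing \cite{EP} directly---though of course that content is hidden inside Theorem~\ref{characterization_omega_Corson} via \cite{MPZ}. Your contradiction setup also makes the ``no isolated points'' hypothesis needed for the $(x,0)\in\overline{\{(y,1):y\in A_n\}}$ step explicit; the paper handles this implicitly by referring back to the proof of Theorem~\ref{theorem-duplicates}. Either route is fine.
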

\begin{proof}
 The equivalence $(i)\Leftrightarrow (ii)$ is known (see \cite[Proposition 3.8]{MPZ}) and the implication $(ii)\Rightarrow (iii)$ is obvious. We need to show $(iii)\Rightarrow (i)$. To this end, assume that $AD(K)$ is $\omega$-Valdivia and let $$h:AD(K)\to [0,1]^\Gamma$$ be
 an embedding such that $h(AD(K))\cap \sigma([0,1]^\Gamma)$ is dense in $h(AD(K))$. We will check that $K$ satisfies condition (ii) of Theorem \ref{characterization_omega_Corson}.
 Note that by Theorem \ref{theorem-duplicates}, the space $K$ is Corson compact. So it remains to verify that
 every nonempty subset $A$ of $K$ has a relatively open finite-dimensional second-countable subspace.

  To this purpose, fix a nonempty subset $A\subseteq K$. Let $C$ be he closure of $A$ in $K$. According to Theorem \ref{theorem-duplicates} the space $K$ is $NY$ compact, so $C$ contains a nonempty relatively open second-countable subset $U$ (by M-scatteredness of $K$; cf. Theorem \ref{characterization_NY_1}).
 For $n\in \omega$, let
$$A_n=\{x\in U:|\supp h(x,1)|\leq n\}.$$
Since each point of the form $(x,1)$ is isolated in $AD(K)$ and $h$ is an $\omega$-Valdivia embedding, we have $h(x,1)\in  \sigma([0,1]^\Gamma)$ for every $x\in K$. Hence $U=\bigcup_{n\in \omega}A_n$. Since $U$ is a Baire space (being an open subspace of a compact space $C$), for some $n\in \omega$ there is an nonempty open subset $V$ of $U$ with $V\subseteq \overline{A_n}$. As in the proof of Theorem \ref{theorem-duplicates}, we show that
\begin{equation}\label{h(U) is in sigma-product2}
 h(x,0)\in \sigma_n([0,1]^\Gamma)\quad \mbox{for all }x\in \overline{V}.
\end{equation}

It is known that the dimension of the space $\sigma_n([0,1]^\Gamma)$ does not exceed $n$ (see \cite{EP}). Since $K$ is homeomorphic to the subspace $K\times \{0\}$ of $AD(K)$, it follows from \eqref{h(U) is in sigma-product2} that $C$ (and hence $A$) contains a finite-dimensional second-countable relatively open subset.
\end{proof}

\begin{exa}\label{Example 4.6}
Let $Q=[0,1]^\omega$ be the Hilbert cube. It is an easy consequence of the Baire category theorem that $Q$ is not strongly countable-dimensional. So $Q$ is not $\omega$-Corson (cf. \cite[Corollary 5.1]{MPZ}). Of course, $Q$ is $NY$ compact being metrizable. It follows form Theorems \ref{theorem-duplicates-omega-Corson} and \ref{theorem-duplicates}  that the duplicate $AD(Q)$ is not $\omega$-Valdivia but it is $NY$-compact. In particular, this example gives an affirmative answer to a question posed by Kubi\'s and Leiderman (see \cite[Question 6.4]{KL}). Since $AD(Q)$ is also uniform Eberlein compact (cf. \cite[Example 4.2]{KM}), it provides an answer to a recent question asked by H\'{a}jek and Russo (see \cite[Problem 7.6 (2)]{HR}).
\end{exa}

\section{$NY$ compacta are invariant under homeomorphisms of $C_p$-spaces}

Given a space $Z$, by $[Z]^{<\omega}$ we denote the family of all finite subsets of $Z$. Recall that a map $T:X\to [Y]^{<\omega}$ is \textit{upper semicontinuous} if for any open subset $U$ of $Y$, the set $\{x\in X:T(x)\subseteq U\}$ is open in $X$. We do not require here that $T(x)\neq\emptyset$. A map $f:X\to Y$ is \textit{finite-to-one} if $f^{-1}(y)\in[X]^{<\omega}$ for all $y\in Y$. A continuous surjection $f:X\to Y$ is said to be \textit{irreducible} if no proper closed subset of $X$ maps onto $Y$. A compact space $K$ is \textit{$\omega$-monolithic} if $\overline{A}$ is second countable for any countable $A\subseteq K$. Every Corson compact space is $\omega$-monolithic (see \cite[U.120]{Tk}).

The proof of the main result of this section will be based on the following theorem of Okunev.

\begin{thrm}\cite[Theorem 1.1]{O}\label{thm Okunev}
Let $X$ and $Y$ be Tychonoff spaces. Suppose that there is an open continuous surjection of $C_p(X)$ onto $C_p(Y)$. Then, there is a sequence of upper semicontinuous maps $T_n:X^n\to  [Y]^{<\omega},\;n\in \mathbb{N}$, such that $\bigcup\{T_n(X^n):n\in \mathbb{N}\}=Y$.
\end{thrm}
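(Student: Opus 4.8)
The plan is to recover the maps $T_n$ from the interplay of continuity and openness of the surjection $\phi\colon C_p(X)\to C_p(Y)$, guided by the classical ``support'' picture behind $l$-equivalence: were $\phi$ a linear homeomorphism, its adjoint would send the evaluation functional at $y$ to a finite linear combination of evaluation functionals at points $x_1,\dots,x_n\in X$, yielding a single finite $\operatorname{supp}(y)=\{x_1,\dots,x_n\}$ on whose values $\phi(\cdot)(y)$ depends, with $y\mapsto\operatorname{supp}(y)$ upper semicontinuous. Lacking linearity one cannot attach one finite tuple to each $y$; instead one records, for each finite tuple $\bar x\in X^{n}$ together with a rational centre and radius, the finite subset of $Y$ to which a box of functions around $\bar x$ confines the point evaluations, and arranges that these finite sets together exhaust $Y$.

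The core is a ``no-escape'' observation. Fix $\bar x=(x_1,\dots,x_n)\in X^n$, a rational vector $v\in\mathbb Q^{n}$ and a rational $\delta>0$, and let $W=W(\bar x,v,\delta)=\{h\in C_p(X)\colon |h(x_i)-v_i|<\delta\text{ for all }i\}$; since $\phi$ is open, $\phi(W)$ is open in $C_p(Y)$. If $W\neq\emptyset$, pick $h_0\in W$ and select, via a fixed well-ordering of basic neighbourhoods, a basic neighbourhood of $\phi(h_0)$ of the form $\{g\colon |g(y')-\phi(h_0)(y')|<\eta\text{ for }y'\in B\}\subseteq\phi(W)$ with $B\in[Y]^{<\omega}$; set $S_{n,v,\delta}(\bar x):=B$ (and $:=\emptyset$ if $W=\emptyset$). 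I claim that every $y\in Y$ for which $\{\phi(h)(y)\colon h\in W\}$ is bounded lies in $B$: if $y\notin B$, then, $Y$ being Tychonoff, there is a continuous $\rho\colon Y\to[0,\infty)$ vanishing on $B$ with $\rho(y)=1$, so $\phi(h_0)+c\rho$ lies in the chosen neighbourhood for every $c\ge 0$, hence equals $\phi(h)$ for some $h\in W$, forcing $\{\phi(h)(y)\colon h\in W\}\supseteq[\phi(h_0)(y),\infty)$, a contradiction. Since, by continuity of $h\mapsto\phi(h)(y)$ at an arbitrary $f_0\in C_p(X)$, the values of $\phi(\cdot)(y)$ are bounded on some box around $f_0$, shrinking to rational data produces $\bar x,v,\delta$ with $y\in S_{n,v,\delta}(\bar x)$; thus $\bigcup\{S_{n,v,\delta}(\bar x)\colon n\in\mathbb N,\ v\in\mathbb Q^{n},\ \delta\in\mathbb Q^{+},\ \bar x\in X^{n}\}=Y$.

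The last step is to reorganise the countably many finite-valued maps $S_{n,v,\delta}$ into one map $T_n\colon X^{n}\to[Y]^{<\omega}$ per $n$: composing an $S_{k,v,\delta}$ (for $k\le m$) with the projection $X^{m}\to X^{k}$ preserves upper semicontinuity, and, along a fixed enumeration of all triples $(k,v,\delta)$ with $k\le m$, letting $T_m$ be the union of the first $m$ of these (pulled back to $X^m$) gives, for each $m$, a single upper semicontinuous finite-valued map, because a finite union of finite sets is finite and a finite union of upper semicontinuous maps is upper semicontinuous (the set where the union lands in an open $O$ is the finite intersection of the corresponding open sets). Then $\bigcup_m\bigl(\bigcup_{\bar x\in X^m}T_m(\bar x)\bigr)=Y$ by the previous paragraph.

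The one genuinely delicate point, and where the argument of \cite{O} must be followed carefully, is the upper semicontinuity of each $S_{n,v,\delta}$, i.e.\ that $\{\bar x\in X^{n}\colon S_{n,v,\delta}(\bar x)\subseteq O\}$ is open for every open $O\subseteq Y$. The obstruction is that $\bar x\mapsto S_{n,v,\delta}(\bar x)$ factors through non-continuous choices — the point $h_0\in W(\bar x,v,\delta)$ and the ``least'' admissible basic neighbourhood — so mere continuity is useless; one must again use openness of $\phi$, together with the fact that the boxes $W(\bar x,v,\delta)$ vary in a controlled way as $\bar x$ moves (always boxes over $n$ shifting points, with $v,\delta$ fixed), to show that a small perturbation of $\bar x$ cannot enlarge the trapping set relative to a prescribed open $O$ — in effect, choosing all the auxiliary data to vary upper-semicontinuously with $\bar x$ rather than jumping. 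This is the main obstacle; once it is in place, the covering statement $\bigcup_n T_n(X^n)=Y$ follows immediately from the construction above.
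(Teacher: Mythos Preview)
The paper does not prove this theorem; it is quoted verbatim from Okunev \cite{O} and used as a black box in the proof of Theorem~\ref{t-equiv}. There is therefore no proof in the paper to compare your attempt against.

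On its own merits, your sketch correctly isolates the two easy ingredients --- the ``no-escape'' argument showing that the set of $y$ at which $\phi(\cdot)(y)$ is bounded on a standard box $W(\bar x,v,\delta)$ is contained in the support $B$ of any basic neighbourhood of any $\phi(h_0)$ inside $\phi(W)$, and the covering of $Y$ by such finite sets via continuity of $\phi$ --- but, as you yourself concede, the upper semicontinuity of $S_{n,v,\delta}$ is not established. With your definition (choose $h_0\in W$ and then a ``least'' admissible basic neighbourhood via a fixed well-ordering) it is not clear that it holds at all: selections made through a well-ordering have no reason to vary semicontinuously, and a small perturbation of $\bar x$ can change $W$, the chosen $h_0$, and hence the selected $B$ abruptly. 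Replacing the choice-based $B$ by the intrinsic set $\{y:\phi(\cdot)(y)\text{ is bounded on }W(\bar x,v,\delta)\}$ (which your argument shows is finite) removes the arbitrariness, but the USC verification for that map is still nontrivial and is precisely what you have deferred to \cite{O}. As written, the proposal is an outline with the one genuinely hard step left open.
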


We will also need the following theorem which may be of independent interest.

\begin{thrm}\label{Lemma fin-to-1}
Let $K$ be an $\omega$-monolithic compact space.
If $K$ is a finite-to-one preimage of a metrizable compactum,  then $K$ contains a nonempty open second-countable subset.
\end{thrm}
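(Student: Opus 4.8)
Let $f:K\to L$ be a finite-to-one continuous surjection onto a metrizable compactum $L$. We want to find a nonempty open $U\subseteq K$ with $w(U)\le\omega$. The natural strategy is to reduce to the case when $f$ is \emph{irreducible}: by Zorn's lemma there is a closed $K_0\subseteq K$ mapping onto $L$ minimally, and $f{\restriction}K_0:K_0\to L$ is then an irreducible finite-to-one map onto a metrizable compactum; moreover $K_0$ is still $\omega$-monolithic (it is a closed subspace of $K$). Since a nonempty open subset of $K_0$ need not be open in $K$, I will instead argue directly that $K_0$ has nonempty \emph{interior in $K$}: irreducibility forces the fibers of $f$ to ``mostly'' be singletons, and this will pin down an open set. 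Concretely, since $f$ is irreducible and closed, the set $\{y\in L: f^{-1}(y)\text{ is a singleton}\}$ is dense $G_\delta$ in $L$ (a classical fact about irreducible maps of compacta), hence comeager.

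First I would set up the following counting decomposition. For each $n\in\omega$ let $L_n=\{y\in L: |f^{-1}(y)|\le n\}$; since $f$ is finite-to-one, $L=\bigcup_n L_n$, and each $L_n$ is closed in $L$ because $f$ is a closed map (the set of $y$ with $|f^{-1}(y)|\le n$ is closed for closed finite-to-one maps of compacta). By the Baire category theorem in the metrizable compactum $L$ there is $k$ and a nonempty open $V\subseteq L$ with $V\subseteq \overline{L_k}$; in fact one can arrange a nonempty open $V$ with $V\cap L_k$ dense in $V$ and, shrinking further, $V\subseteq L_k\setminus L_{k-1}$ is \emph{not} possible in general, so instead I take $V$ minimal: choose $k$ least such that $\Int_L(\overline{L_k})\neq\emptyset$ and let $V=\Int_L(\overline{L_k})$. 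The key point is that over a suitable nonempty open $V'\subseteq V$ the fiber size is \emph{constant}, equal to some $m\le k$: the function $y\mapsto|f^{-1}(y)|$ is lower semicontinuous (for closed maps: $\{|f^{-1}(y)|\ge m\}$ is closed), and combined with $V\subseteq\overline{L_k}$ one extracts a nonempty open $V'$ on which the fiber size is identically $m$.

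Over such a $V'$, the restriction $f^{-1}(V')\to V'$ is an open (since $f$ is closed with finite fibers of locally constant cardinality, it is also open there) finite-to-one map with all fibers of size exactly $m$; a standard argument shows that $f^{-1}(V')$ then splits locally into $m$ disjoint clopen-in-$f^{-1}(V')$ pieces, each mapped homeomorphically onto an open subset of $V'$ — more carefully, using that $V'$ is metrizable and the map is a local homeomorphism onto $V'$ on each such piece. Hence some nonempty open $W\subseteq f^{-1}(V')\subseteq K$ is homeomorphic to an open subset of the metrizable space $V'$, so $w(W)\le\omega$, which is what we want.

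\textbf{Main obstacle.} The delicate step is getting from ``fiber size is eventually $\le k$ on a comeager set'' to ``fiber size is \emph{exactly} $m$, locally constant, on a genuinely open set,'' and then to ``$f$ restricted there is a local homeomorphism.'' Lower semicontinuity of $y\mapsto|f^{-1}(y)|$ for closed finite-to-one maps, and the resulting local splitting of $f^{-1}(V')$ into sheets, is where the compactness and metrizability of $L$ must be used carefully; the role of $\omega$-monolithicity of $K$ — which I have not yet invoked — is presumably to handle the case where the local-homeomorphism argument only yields that a nonempty open $W$ has a dense subset of countable character or a countable network, from which $\omega$-monolithicity upgrades $\overline{W}$, and hence an open subset, to second countability. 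I would keep $\omega$-monolithicity in reserve precisely for this last upgrade.
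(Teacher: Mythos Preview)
Your two semicontinuity assertions about the fiber-cardinality function $g(y)=|f^{-1}(y)|$ are both false for closed finite-to-one maps of compacta, and without them the argument collapses. For the claim that $L_n=\{y:g(y)\le n\}$ is closed: take $K=[0,1]\sqcup\{p\}$ with $p$ isolated, $L=[0,1]$, $f\upharpoonright[0,1]=\mathrm{id}$ and $f(p)=0$; then $L_1=(0,1]$ is not closed. For the claim that $\{y:g(y)\ge m\}$ is closed: take $K=\{(x,0):x\in[0,1]\}\cup\{(x,x):x\in[0,1]\}\subseteq[0,1]^2$ with $f$ the first-coordinate projection; then $\{y:g(y)\ge 2\}=(0,1]$ is not closed. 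With neither direction of semicontinuity available you cannot produce a nonempty open $V'\subseteq L$ on which the fiber size is constant, so the covering-map step has nothing to stand on. Note also that $\omega$-monolithicity is genuinely needed in the theorem --- the double-arrow space maps $2$-to-$1$ onto $[0,1]$ yet has no nonempty open second-countable subset --- while in your argument it appears only as a vague ``upgrade'' at the end with nothing concrete to upgrade.

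The paper's proof proceeds differently, by contradiction. Assuming no nonempty open subset of $K$ is second-countable, it recursively builds a decreasing chain $\overline{U_0}\supseteq\overline{U_1}\supseteq\cdots$ together with pairwise disjoint compacta $K_{n+1}\subseteq\overline{U_n}$ such that $f\upharpoonright K_{n+1}$ is an \emph{irreducible} map onto $f(\overline{U_n})$. An irreducible closed preimage of a metrizable compactum has countable $\pi$-weight, hence is separable, and $\omega$-monolithicity then makes $K_{n+1}$ second-countable; the standing hypothesis therefore forces $U_n\setminus K_{n+1}\neq\emptyset$, and the recursion continues inside that gap. Any $x\in\bigcap_n\overline{U_n}$ has $f(x)\in f(\overline{U_n})=f(K_{n+1})$ for every $n$, so $f^{-1}(f(x))$ meets each of the pairwise disjoint $K_{n+1}$ --- an infinite fiber, contradicting finite-to-one. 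Your opening paragraph was in fact headed in the right direction (you noted that an irreducible $K_0$ is separable), but you abandoned it because a single irreducible piece need not have interior in $K$; the missing idea is to \emph{iterate} the irreducible-subspace construction and use finite-to-one as the obstruction that terminates it.
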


\begin{proof}
Fix a finite-to-one map $f:K\to L$ of $K$ onto a metrizable compactum $L$. Striving for a contradiction suppose that no nonempty open subset of $K$ is second-countable. Recursively, construct two sequences $K_1,K_2,\ldots$ and $U_0, U_1,\ldots$ of nonempty subsets of $K$, such that for all $n$ the following conditions are satisfied:
\begin{enumerate}
 \item $K_n$ is compact;
 \item $U_n$ is open in $K$;
 \item $U_{n+1}\subseteq \overline{U_{n+1}}\subseteq U_n$;
 \item $\overline{U_n}\cap K_n=\emptyset$ and $K_{n+1}\subseteq \overline{U_n}$;
 \item $f(\overline{U_n})=f(K_{n+1})$ and the map
 $f\upharpoonright K_{n+1}:K_{n+1}\to f(\overline{U_n})$ is irreducible.
\end{enumerate}
Put $U_0=K$ and find $K_1\subseteq K$ such that $f\upharpoonright K_1:K_1\to L$ is irreducible (see \cite[S.366]{Tk1}). Fix $n\geq 0$ and suppose that the sets $U_n$ and $K_{n+1}$ are already defined in such a way that the conditions (1)--(5) are satisfied.
By condition (5), the restriction of $f$ to the set $K_{n+1}$ maps $K_{n+1}$ irreducibly onto a metrizable compactum $f(\overline{U_n})$. It follows that $K_{n+1}$ has a countable $\pi$-base (cf. \cite[S.228, Fact 1]{Tk1}) and, in particular, $K_{n+1}$ is separable. Since $K$ is $\omega$-monolithic, $K_{n+1}$ must be second countable. By our assumption, $U_n\setminus K_{n+1}\neq\emptyset$ for otherwise $K$ would contain a nonempty second-countable subset $U_n$. Let $U_{n+1}$ be a nonempty open subset of $K$ satisfying
$$U_{n+1}\subseteq \overline{U_{n+1}}\subseteq U_n\setminus K_{n+1}.$$
Consider the map $g=f\upharpoonright \overline{U_{n+1}}:\overline{U_{n+1}}\to f(\overline{U_{n+1}})$ and find a compact subset $K_{n+2}\subseteq \overline{U_{n+1}}$ such that the restriction of $g$ to $K_{n+2}$ is irreducible (see \cite[S.366]{Tk1}). This finishes the recursive construction.

According to (3), there is $x\in \bigcap_{n=0}^\infty \overline{U_n}$. Put $y=f(x)$. Since $x\in \overline{U_n}$ for all $n$, we have $f^{-1}(y)\cap K_n\neq\emptyset$ for all $n$, by (5). Since the sequence $(K_n)_{n=1}^\infty$ is pairwise disjoint (see (4)) we infer that the fiber $f^{-1}(y)$ is infinite which is impossible because $f$ is finite-to-one.
\end{proof}

\begin{prop}\label{proposition finite-to-1}
Let $K$ be an Eberlein compact space. If $K$ is a finite-to-one preimage of an M-scattered compactum, then $K$ is M-scattered.
\end{prop}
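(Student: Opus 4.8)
The plan is to prove the (equivalent) statement that every nonempty \emph{closed} subspace $C$ of $K$ contains a nonempty relatively open subspace of countable weight. This is enough to conclude M-scatteredness: given an arbitrary nonempty $A\subseteq K$, apply the assertion to $C=\overline{A}$ to get $U$ open in $K$ with $\emptyset\neq U\cap C$ second-countable; then $U\cap A$ is nonempty (as $A$ is dense in $C$), relatively open in $A$, and of countable weight, being a subspace of the second-countable space $U\cap C$.

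So fix a nonempty closed $C\subseteq K$. The idea is to localize $C$ over the part of $L$ that is metrizable, pass to a \emph{closed} neighborhood there, and then feed that neighborhood into Theorem \ref{Lemma fin-to-1}. Write $g=f\upharpoonright C$. Its image $g(C)=f(C)$ is a closed subspace of the M-scattered compactum $L$, and M-scatteredness obviously passes to every subspace, so $f(C)$ is M-scattered; in particular $f(C)$ itself has a nonempty relatively open second-countable subset $W$. Put $C_0=g^{-1}(W)$, which is open in $C$ and nonempty since $g$ maps onto $f(C)\supseteq W$. As $C$ is compact Hausdorff, pick a nonempty open $U\subseteq C$ with $\overline{U}\subseteq C_0$ (closure in $C$) and set $N=\overline{U}$.

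Now $N$ is compact, $f\upharpoonright N$ is a finite-to-one continuous surjection of $N$ onto $f(N)$, and $f(N)\subseteq W$ is a compact metrizable space, being a compact subset of the second-countable space $W$. Moreover $K$ is Eberlein compact, hence Corson compact, hence $\omega$-monolithic, and $\omega$-monolithicity passes to the closed subspace $N$; so $N$ is an $\omega$-monolithic compactum which is a finite-to-one preimage of a metrizable compactum. Theorem \ref{Lemma fin-to-1} then gives a nonempty open $V\subseteq N$ which is second-countable. Since $U$ is dense in $N=\overline{U}$, we have $V\cap U\neq\emptyset$; since $U\subseteq N$, the set $V\cap U$ equals $\widetilde V\cap U$ for any $C$-open $\widetilde V$ with $V=\widetilde V\cap N$, hence is open in $C$; and $V\cap U\subseteq V$ has countable weight. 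Thus $V\cap U$ is the required nonempty relatively open second-countable subset of $C$.

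The only genuinely substantial ingredient is Theorem \ref{Lemma fin-to-1}, whose proof is already given; the rest is a routine reduction whose one delicate point is to arrange that we apply that theorem to a \emph{compact} subspace whose image under $f$ is metrizable, which forces us to replace $C_0$ by the closure $N=\overline{U}$. A second minor subtlety is that Theorem \ref{Lemma fin-to-1} produces a set open in $N$ rather than in $C$; this is harmless because intersecting with the dense open subset $U\subseteq N$ yields a set that is genuinely open in $C$ and still meets $V$.
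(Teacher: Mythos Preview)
Your proof is correct and follows essentially the same approach as the paper's: reduce to a closed subspace $C$, use M-scatteredness of $f(C)$ to find a second-countable open patch in the image, pass to a compact neighborhood in $C$ lying over that patch, and invoke Theorem~\ref{Lemma fin-to-1}. The only cosmetic difference is that the paper shrinks in the \emph{range} (choosing $V$ with $\overline{V}\subseteq U$ inside $f(C)$ and then working with $\overline{h^{-1}(V)}$), whereas you shrink in the \emph{domain} (choosing $U$ with $\overline{U}\subseteq g^{-1}(W)$); both maneuvers achieve the same goal of producing a compact set in $C$ with metrizable image, and both require the same closing remark to pass from ``open in $N$'' to ``open in $C$''.
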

\begin{proof}
Let $f:K\to L$ be a continuous finite-to-one surjection of $K$ onto an M-scattered compact space $L$. Fix a nonempty subset $A$ of $K$. It is sufficient to show that the closure $\overline{A}$ of $A$ in $K$ contains a nonempty relatively open second-countable subset (because then $A$ contains such a subset as well). Of course $\overline{A}$ is an Eberlein compact space.

Let $h:\overline{A}\to f(\overline{A})$ be the restriction of $f$ to $\overline{A}$. The map $h$ is a finite-to-one surjection.
Since $L$ is M-scattered, there is a nonempty open second-countable subset $U$ of $f(\overline{A})$. Let $V$ be a nonempty open subset of $f(\overline{A})$ satisfying $\overline{V}\subseteq U$. The set $\overline{V}$ is metrizable and compact.

Let $g$ be the restriction of $h$ to $\overline{h^{-1}(V)}$. Since $\overline{h^{-1}(V)}\subseteq h^{-1}(\overline{V})$, the range of $g$ is a subset of $\overline{V}$, thus it is compact metrizable. We can therefore apply Theorem \ref{Lemma fin-to-1} to the map $g$. Consequently, we get an open subset $W$ of $\overline{A}$ such that $W\cap \overline{h^{-1}(V)}$ is nonempty and second-countable, whence $W\cap h^{-1}(V)$ is nonempty, second-countable and open in $\overline{A}$.
\end{proof}

The following lemma noted by Zakrzewski \cite{Z} is an easy consequence of the Baire category theorem:

\begin{lemma}\label{lemma Zakrzewski}
If a compact space $K$ is a countable union of M-scattered compacta, then $K$ is M-scattered.
\end{lemma}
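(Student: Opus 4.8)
The plan is to prove Lemma \ref{lemma Zakrzewski} by induction on the ordinal $\alpha$ for which $K^{(\alpha)}=\emptyset$, using the elementary observation that the M-derivative behaves well with respect to finite (hence, via Baire category, countable) unions. Suppose $K=\bigcup_{n\in\omega}K_n$ where each $K_n$ is a closed M-scattered subspace of $K$. First I would record the key fact that for a closed subspace $F$ of a compact space $K$ we have $F^{(\alpha)}\subseteq K^{(\alpha)}\cap F$ for every ordinal $\alpha$; this is a straightforward induction, the successor step using that a set open in $K$ is also open (relatively) in $F$. The real content is to show the reverse containment holds ``in the limit'', i.e.\ that if every $K_n^{(\alpha)}=\emptyset$ then $K^{(\alpha)}=\emptyset$ for a suitable $\alpha$.

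The heart of the argument is the following claim: if $K=A\cup B$ with $A$ closed and M-scattered with $Mht(A)\le\alpha$, and $B$ closed, then $K^{(\alpha)}\subseteq B^{(\alpha)}$; more precisely $K^{(\alpha)}=B^{(\alpha)}\setminus(\text{something open})$, but the inclusion is what matters. The point is that $K\setminus A$ is relatively open in $K$ and is covered by $A$'s part: since $A^{(\alpha)}=\emptyset$, every point of $A$ has a relatively open (in $A$) second-countable neighbourhood, but to transfer this to $K$ one argues that the M-derivatives of $K$, restricted to $A$, are dominated by those of $A$ — again $A^{(\beta)}\supseteq K^{(\beta)}\cap A$ fails in general, so instead I would proceed directly: show by induction on $\beta\le\alpha$ that $K^{(\beta)}\cap A\subseteq A^{(\beta)}$ is false in general but $K^{(\beta)}\subseteq (K\setminus A)^{-}\cup A^{(\beta)}$ can be salvaged. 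The clean route, which is surely what the authors intend, is: reduce to two pieces by induction on the number of sets (countably many handled by Baire category at the end), and for two closed sets $A,B$ prove $Mht(A\cup B)\le Mht(A)+Mht(B)$ — or at least that $A\cup B$ is M-scattered with M-height bounded by $Mht(A)\cdot Mht(B)$ or $\max$ plus a shift. Concretely, for a nonempty subset $S$ of $A\cup B$: if $S\cap(A\cup B\setminus A)\ne\emptyset$, i.e.\ $S\not\subseteq A$, then $S\cap(B\setminus A)$ is a nonempty subset of $B$, which being M-scattered has a nonempty relatively open second-countable subset; pulling back, since $B\setminus A$ is open in $A\cup B$, this yields a relatively open second-countable subset of $S$. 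If instead $S\subseteq A$, use that $A$ is M-scattered. Either way $S$ has a nonempty relatively open second-countable subset, so $A\cup B$ is M-scattered.

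With M-scatteredness of finite unions in hand, the countable case follows from the Baire category theorem exactly as in the cited applications: given a nonempty closed $S\subseteq K$, write $S=\bigcup_n(S\cap K_n)$, a countable union of closed subsets of the Baire space $S$ (compact, hence Baire); some $S\cap K_{n_0}$ has nonempty interior $U$ in $S$; then $U$ is a nonempty subset of the M-scattered space $K_{n_0}$, hence contains a nonempty relatively open second-countable subset $W$; since $U$ is open in $S$, $W$ is relatively open in $S$ and second-countable. Thus $S$ — and therefore every nonempty subspace of $K$, by taking closures — has the required property, so $K$ is M-scattered. The main obstacle, such as it is, is resisting the temptation to phrase everything through the $K^{(\alpha)}$ machinery, where the inclusion $F^{(\alpha)}\subseteq K^{(\alpha)}$ goes the ``wrong'' way for this purpose; working directly from the definition of M-scatteredness via the ``$S\subseteq A$ or $S\cap(B\setminus A)\ne\emptyset$'' dichotomy sidesteps this entirely and keeps the proof to a few lines.
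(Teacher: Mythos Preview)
Your final paragraph is correct and is precisely the argument the paper has in mind: the lemma is stated without proof as ``an easy consequence of the Baire category theorem,'' and your Baire argument (find $S\cap K_{n_0}$ with nonempty interior $U$ in the compact set $S$, then use M-scatteredness of $K_{n_0}$ on $U$, then pass from closed $S$ to arbitrary subspaces via closures) is exactly that consequence. The first two paragraphs---the M-derivative machinery and the two-set dichotomy---are unnecessary detours, since the Baire step handles the countable union directly without any reduction to finite unions; you can delete them.
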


\begin{thrm}\label{t-equiv}
 If the spaces $C_p(K)$ and $C_p(L)$ are homeomorphic, then $K$ is $NY$ compact if and only if so is $L$.
\end{thrm}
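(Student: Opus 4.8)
The strategy is to combine the internal characterization of $NY$ compactness from Theorem \ref{characterization_NY_2} with the fact that $\omega$-monolithicity, Eberlein compactness and M-scatteredness are each preserved under the relevant operations. First note that the class of $NY$ compact spaces already sits inside the class of Eberlein compact spaces, and Eberlein compactness is known to be determined by the topology of $C_p(K)$ (indeed, it is preserved under $t$-equivalence, and even under $\ell$-equivalence; this is classical work of Gul'ko and is available in the literature). So if $C_p(K)$ and $C_p(L)$ are homeomorphic and $K$ is $NY$ compact, then $K$ is Eberlein compact, hence $L$ is Eberlein compact too. By Theorem \ref{characterization_NY_2}, it therefore suffices to prove that M-scatteredness transfers from $K$ to $L$; by symmetry this establishes the full equivalence.

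\textbf{Transferring M-scatteredness.} Assume $K$ is $NY$ compact — in particular Eberlein compact and M-scattered — and that $\varphi:C_p(K)\to C_p(L)$ is a homeomorphism. Both $C_p(K)$ and $C_p(L)$ are then open continuous images of one another (the identity composed with $\varphi$ and its inverse), so Okunev's Theorem \ref{thm Okunev} applies and yields a sequence of upper semicontinuous set-valued maps $T_n:K^n\to[L]^{<\omega}$ with $\bigcup_n T_n(K^n)=L$. For a fixed $n$, consider the set $L_n=T_n(K^n)=\bigcup_{\bar x\in K^n}T_n(\bar x)\subseteq L$; upper semicontinuity of $T_n$ makes the associated multifunction well-behaved enough that $\overline{L_n}$ is a finite-to-one continuous preimage of a quotient of the compact space $K^n$. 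More precisely, the graph-type construction gives a compact space $M_n\subseteq K^n\times L$ (the closure of $\{(\bar x,y):y\in T_n(\bar x)\}$) which maps onto $\overline{L_n}$ with finite fibers over the points actually hit by $T_n$, and which maps to $K^n$ continuously; since $K^n$ is M-scattered (finite products of M-scattered Eberlein compacta are M-scattered — M-scatteredness is inherited by subspaces and, via the hereditarily metalindel\"of plus M-scattered characterization together with Proposition \ref{Proposition-Yakovlev}, stable under finite products within the Eberlein class) and $M_n$ is an Eberlein compact finite-to-one preimage of it, Proposition \ref{proposition finite-to-1} shows $M_n$ is M-scattered, hence so is its continuous image $\overline{L_n}$.

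\textbf{Assembling the pieces.} Having shown that each $\overline{L_n}$ is M-scattered, we write $L=\bigcup_n L_n\subseteq\bigcup_n\overline{L_n}$, and since $L$ is compact this exhibits $L$ as a countable union of the M-scattered compacta $\overline{L_n}\cap L$ (each closed in $L$, hence compact and M-scattered as a closed subspace). Lemma \ref{lemma Zakrzewski} then gives that $L$ is M-scattered. Combined with the Eberlein compactness of $L$ noted above, Theorem \ref{characterization_NY_2} yields that $L$ is $NY$ compact. Running the same argument with the roles of $K$ and $L$ exchanged completes the proof.

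\textbf{Main obstacle.} The delicate point is the middle step: extracting from an upper semicontinuous compact-valued multifunction $T_n$ a genuine \emph{finite-to-one} continuous surjection onto (a dense-in, hence all of) $\overline{L_n}$ from an Eberlein compactum, so that Proposition \ref{proposition finite-to-1} can be invoked. One must be careful that $T_n$ may take the empty value, that its image need not be closed, and that the natural ``graph'' projection is finite-to-one only over $T_n(K^n)$ and not necessarily over all of $\overline{L_n}$; handling this requires either passing to the closure of the graph and checking that the boundary points do not destroy finiteness of the relevant fibers, or reducing to the subspace $\overline{L_n}$ directly and arguing that M-scatteredness of a dense subset of an Eberlein compactum with all the needed structure propagates. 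The auxiliary Theorem \ref{Lemma fin-to-1} (finite-to-one preimages of metrizable compacta have open second-countable pieces, under $\omega$-monolithicity) is exactly what licenses this, since Eberlein compacta are $\omega$-monolithic; so the argument should go through once the multifunction-to-map reduction is set up with due care.
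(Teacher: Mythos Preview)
Your strategy is the same as the paper's: reduce via Theorem \ref{characterization_NY_2} to transferring M-scatteredness, apply Okunev's theorem, pass to the graph $G_n\subseteq K^n\times L$, invoke Proposition \ref{proposition finite-to-1} for the projection onto $K^n$, and finish with Lemma \ref{lemma Zakrzewski}. The ``main obstacle'' you worry about does not exist: upper semicontinuity of $T_n$ with closed values in a Hausdorff space already makes the graph $G_n=\{(\bar x,y):y\in T_n(\bar x)\}$ closed in $K^n\times L$, so no closure is needed and $T_n(K^n)=\pi_L(G_n)$ is automatically compact. The finite-to-one map relevant to Proposition \ref{proposition finite-to-1} is the projection $G_n\to K^n$, whose fiber over $\bar x$ is $\{\bar x\}\times T_n(\bar x)$; the projection to $L$ is \emph{not} finite-to-one, so your remark about ``finite fibers over the points actually hit by $T_n$'' is a wrong turn.

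The genuine gap is the step ``$M_n$ is M-scattered, hence so is its continuous image $\overline{L_n}$.'' You give no reason why M-scatteredness descends along continuous surjections of compacta, and this is not obvious. The paper closes this by a short detour: since $G_n$ is Eberlein and M-scattered, Theorem \ref{characterization_NY_2} makes $G_n$ an $NY$ compactum; \emph{that} class is stable under continuous images (\cite[Corollary 5.3]{MPZ}), so $T_n(K^n)$ is $NY$ compact and therefore M-scattered. The same trick handles $K^n$ (finite products of $NY$ compacta are trivially $NY$ compact from the definition), which is cleaner than your proposed route through products of hereditarily metalindel\"of spaces.
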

\begin{proof}
By symmetry it is sufficient to show that if $K$ is $NY$ compact, then $L$ is $NY$ compact too. So let us assume that $K$ is $NY$ compact. In particular, $K$ is Eberlein compact and since the class of Eberlein compacta is invariant under homeomorphisms of $C_p$-spaces (see \cite[Corollary IV.1.8]{Ar}), the space $L$ is Eberlein compact as well. According to Theorem \ref{characterization_NY_2} it suffices to show that $L$ is M-scattered. Let $(T_n)_{n\in \mathbb{N}}$ be a sequence of upper semicontinuous maps $T_n:K^n\to [L]^{<\omega}$ provided by Theorem \ref{thm Okunev}. For each $n\in \mathbb{N}$, let $G_n=\{(\mathbf{x},y)\in K^n\times L:y\in T_n(\mathbf{x})\}$ be the graph of $T_n$. Since $T_n$ is upper semicontinuous, it easily follows that $G_n$ is closed in $K^n\times L$. Hence, $G_n$ is an Eberlein compact space for all $n\in \mathbb{N}$ (see \cite[Theorem III.3.4]{Ar} and \cite[Proposition III.3.5]{Ar}). Since the values of each $T_n$ are finite subsets of $L$, the set $G_n$ is a finite-to-one preimage of a closed subspace of $K^n$. By Proposition \ref{proposition finite-to-1}, the space $G_n$ is M-scattered and thus $NY$ compact (cf. Theorem \ref{characterization_NY_2}). Let $\pi_n:K^n\times L\to L$ be the projection. For every $n\in \NN$, the set $\pi_n(G_n)=T_n(K^n)$ is $NY$ compact, in particular M-scattered, because the class of $NY$ compacta is invariant under taking continuous images (see \cite[Corollary 5.3]{MPZ}). According to Theorem \ref{thm Okunev} we have
$$\bigcup\{\pi_n(G_n):n\in \mathbb{N}\}=\bigcup\{T_n(K^n):n\in \mathbb{N}\}=L$$
so, by Lemma \ref{lemma Zakrzewski}, the space $L$ is M-scattered.
\end{proof}

We do not know if analogous result is true for the class of $\omega$-Corson compacta. In the light of Theorem \ref{t-equiv} and \cite[Corollary 5.1]{MPZ} this reduces to the following:

\begin{question}
 Suppose that $K$ and $L$ are $NY$ compact spaces and let $K$ be strongly countable-dimensional. Suppose further that the spaces $C_p(K)$ and $C_p(L)$ are homeomorphic. Must $L$ be strongly countable-dimensional?
\end{question}

\section*{Acknowledgements}
We wish to thank the referee for valuable comments, and in particular for pointing out a more general statement for Theorem \ref{Lemma fin-to-1}.

The authors were partially supported by
Fundaci\'{o}n S\'{e}neca - ACyT Regi\'{o}n de Murcia project 21955/PI/22, Agencia Estatal de Investigación (Government of Spain) and Project PID2021-122126NB-C32 funded by
MICIU/AEI /10.13039/\\
501100011033/ and FEDER A way of making Europe (A. Avil\'es and M. Krupski); European Union - NextGenerationEU funds through Mar\'{i}a Zambrano fellowship and
the NCN (National Science Centre, Poland) research Grant no.\\
2020/37/B/ST1/02613 (M. Krupski)

\end{document}